\def\R{\mathbb{R}}
\def\C{\mathbb{C}}
\def\E{\mathbb{E}}
\def\P{\mathbb{P}}
\def\N{\mathbb{N}}
\def\Z{\mathbb{Z}}
\def\K{\mathcal{K}}
\def\deg{\mathrm{deg}\:}
\newtheorem{formula}{}
\newtheorem{lemma}[formula]{\indent Lemma}
\newtheorem{theorem}[formula]{\indent Theorem}
\newtheorem{corollary}[formula]{\indent Corollary}
\newtheorem{proposition}[formula]{\indent Proposition}
\newtheorem{remark}[formula]{\indent Remark}
\newtheorem{example}[formula]{\indent Example}
\newtheorem{definition}[formula]{\indent Definition}
\newcommand{\revprod}{\mathop{\overleftarrow{\prod}}}
\begin{document}
\title[$2$-dimensional random walks in cones]
{Harmonic polynomials and other exactly computable characteristics for $2$-dimensional random walks in cones} 
\thanks{Funded by the Deutsche Forschungsgemeinschaft (DFG, German Research Foundation) –
Project-ID 317210226 – SFB 1283.}
\author[Denisov]{Denis Denisov}
\address{Department of Mathematics, University of Manchester, UK}
\email{denis.denisov@manchester.ac.uk}
\author[Elizarov]{Nikita Elizarov}
\address{Faculty of Mathematics, Bielefeld University, Germany}
\email{nelizarov@math.uni-bielefeld.de}

\author[Wachtel]{Vitali Wachtel}
\address{Faculty of Mathematics, Bielefeld University, Germany}
\email{wachtel@math.uni-bielefeld.de}

\begin{abstract}
        In this note we consider $2$-dimensional lattice random walks killed at leaving a wedge with opening $\alpha\in(0,\pi]$. Assuming that the walk cannot jump over the boundary of the wedge we prove that there exists a harmonic polynomial if and only if $\alpha=\pi/m$ with some integer $m$. Our proof is constructive and allows one to give exact expressions for harmonic polynomials for every integer $m$. Furthermore, we give exact expressions for all finite moments of the exit time, this result is valid for all angles $\alpha$.
\end{abstract}


\keywords{Random walk, harmonic polynomial, }
\subjclass{Primary 60G50; Secondary 60G40}
\maketitle
\section{Introduction and main results}
Let $S(n)=X(1)+X(2)+\ldots+X(n)$, $n\ge1$ be a $d$-dimensional random walk, where $\{X(k)\}$ are independent copies of the vector $X=(X_1,X_2,\ldots,X_d)$. We shall always assume that 
\begin{equation}
\label{eq:1_and_2-moments}
\E[X_i]=0\text{ and }\E[X_iX_j]=\delta_{ij}\text{ for all }1\le i,j\le d.
\end{equation}
Let $K$ be an open cone in $\R^d$ with vertex at zero. For every $x\in K$ we define 
$$
\tau_x:=\{n\ge1: x+S(n)\notin K\}.
$$
In papers \cite{DW15,DW19,DW24}, the authors have shown the existence of positive harmonic functions for the walk $S(n)$ killed at leaving the cone $K$. More 
precisely, these papers contain different constructions of a positive function $V(x)$ satisfying 
\begin{equation}
\label{eq:V-harm}
V(x)=\E[V(x+S(1));\tau_x>1],\quad x\in K.
\end{equation}
Besides the existence of a solution to \eqref{eq:V-harm}, one knows some asymptotic bounds for $V$, which imply that 
$$
V(x)\sim u(x)\quad\text{as }\delta(x):={\rm dist}(x,\partial K)\to\infty,
$$
where $u(x)$ is harmonic on $K$ in the classical sense, that is, $u(x)>0$ in the cone $K$
and 
\begin{equation}
\label{eq:u-harm}
\Delta u(x)=0,\ x\in K
\quad\text{and}\quad 
u\vert_{\partial K}=0. 
\end{equation}
But there are almost no examples where one has a closed form expression for the function $V$.
The purpose of the present note is to find a class of examples where one can compute $V(x)$ explicitly.

We shall primary deal with the case $d = 2$ and a family of convex cones
$\{K_\alpha\}$, which are described by their boundaries:
\begin{align*}
        x_2 = 0,\quad x_2 = \tan{(\alpha)}x_1,\quad \alpha\in(0,\pi].
\end{align*}
For every cone $K_\alpha$ one has a closed form expression for solutions to \eqref{eq:u-harm}:
all multiples of the function
$$
u_\alpha(r\cos\varphi,r\sin\varphi):=r^{\pi/\alpha}\sin\left(\pi\varphi/\alpha\right),
\quad r>0,\:\:\varphi\in(0,\alpha)
$$
solve \eqref{eq:u-harm} with $K=K_\alpha$. This implies that if $\alpha=\pi/m$ for some integer $m$, then $u_{\pi/m}(x)$ is a homogeneous polynomial of degree $m$. For example,
\begin{equation}
\label{eq:u_for_m=3}
u_{\pi/3}(x_1,x_2)=3x_1^2x_2-x_2^3
\end{equation}
and
\begin{equation}
\label{eq:u_for_m=4}
u_{\pi/4}(x)=x_1^3x_2-x_1x_2^3.
\end{equation}
We next notice that the functions $u_{\pi/m}$ remain harmonic in the whole plane:
$$
\Delta u_{\pi/m}(x)=0\quad\text{for all }x\in\R^2.
$$
But these functions are not harmonic for discrete time random walks in the whole plane. Simple calculations show that 
$$
\E[u_{\pi/3}(x+X)]=u_{\pi/3}(x)+3\E[X_1^2X_2]-\E[X_2^3]
$$
for any random walk satisfying \eqref{eq:1_and_2-moments} and having 
finite third moments. Thus, the harmonicity property holds if and only if 
$3\E[X_1^2X_2]=\E[X_2^3]$. However, one can obtain a harmonic polynomial 
by adding a polynomial of degree $2$. Indeed, for a polynomial 
$Ax_1^2+Bx_2^2+Cx_1x_2$ one has 
\begin{align*}
    &\E[A(x_1+X_1)^2+B(x_2+X_2)^2+C(x_1+X_1)(x_2+X_2)]\\
    &\hspace{1cm}=Ax_1^2+Bx_2^2+A+B+Cx_1x_2
\end{align*}
and choosing constants $A$, $B$ and $C$ so that $A+B=-3\E[X_1^2X_2]+\E[X_2^3]$, we conclude that the polynomial $h(x)=u_{\pi/3}(x)+Ax_1^2+Bx_2^2+Cx_1x_2$ satisfies
\begin{equation}
\label{eq:harm-wholeplane}
\E[h(x+X)]=h(x),\quad x\in\mathbb{R}^2.
\end{equation}
Furthermore, it is easy to check that if one chooses 
$$
A=0,\quad B=-3\E[X_1^2X_2]+\E[X_2^3]\quad\text{and}\quad 
C=3^{3/2}\E[X_1^2X_2]-3^{1/2}\E[X_2^3],
$$
then, besides \eqref{eq:harm-wholeplane}, one has $h(x)=0$ for every $x\in\partial K_{\pi/3}$.
Thus, if we consider a walk $S(n)$ such that $x+S(\tau_x)\in\partial K_{\pi/3}$ then, applying the optional stopping theorem to the martingale $h(x+S(n))$, we obtain 
\begin{align*}
\E[h(x+S(n));\tau_x>n]
=\E[h(x+S(n\wedge\tau_x))]=h(x).
\end{align*}
In particular, the function $h$ is harmonic for $S(n)$ killed at leaving $K_{\pi/3}$. 
Our first result shows that this observation remains valid for all
wedges $K_{\pi/m}$, $m\ge1$.
\begin{theorem}
\label{thm:harmonic}
Assume that the random walk $S(n)$ is a lattice, i.e. $\P(S(n)\in R\text{ for all }n)=1$ for some $2$-dimensional lattice $R$.  Assume also that
\eqref{eq:1_and_2-moments} holds and that 
\begin{equation}
\label{eq:no-overshoot}
x+S(\tau_x)\in\partial K_\alpha\quad\text{for all }x\in R\cap\partial K_\alpha.
\end{equation}
\begin{itemize}
    \item[(a)] If $\alpha=\pi/m$ for some $m\ge2$ and if $\E|X|^m$ is finite then there exists a polynomial $r_m(x)$ of degree at most $(m-1)$ such that 
    $h(x):=u_{\pi/m}(x)+r_m(x)$ is harmonic for $S(n)$ killed at leaving $K_{\pi/m}$.
    All the coefficients of $r_m(x)$ are linear combinations of mixed moments $\E[X_1^kX_2^l]$,
    $k,l\le m$.
    \item[(b)] If a positive polynomial $h(x)$ has degree $m$, is harmonic for $S(n)$ killed at leaving $K_\alpha$, $\alpha\in(0,\pi)$ and 
    $h(x)\vert_{R\cap\partial K_\alpha}=0$ then
    $\alpha=\pi/m$ and there exists a constant $c$ such that the degree of $h(x)-cu_{\pi/m}(x)$
    does not exceed $m-1$.
\end{itemize}
\end{theorem}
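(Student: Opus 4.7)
My strategy is to strengthen the requirement: I look for $h:=u_{\pi/m}+r_m$ that is a martingale on the whole plane, i.e.\ $\E[h(x+X)]=h(x)$ for every $x\in\R^2$, and that vanishes on $\partial K_{\pi/m}$. By \eqref{eq:no-overshoot} the walk exits $K_{\pi/m}$ onto $\partial K_{\pi/m}$, so optional stopping applied to the martingale $h(x+S(n))$ at the bounded time $n\wedge\tau_x$, combined with $h\vert_{\partial K_{\pi/m}}=0$, recovers the killed-walk harmonicity exactly as in the paper's $m=3$ example. Define $Lp(x):=\E[p(x+X)]-p(x)$; by Taylor expansion and \eqref{eq:1_and_2-moments}, $L=\tfrac12\Delta+L_{\geq3}$, where $L_k$ involves $k$-th mixed moments of $X$ and strictly drops polynomial degree by $k$. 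Since $\Delta u_{\pi/m}=0$, the polynomial $Lu_{\pi/m}$ has degree at most $m-3$.

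Let $V_m$ be the space of polynomials of degree at most $m-1$ vanishing on $\partial K_{\pi/m}$, and $W_m$ the space of polynomials of degree at most $m-3$. Every element of $V_m$ factors as $P\cdot q$ with $P(x):=x_2(x_1\sin(\pi/m)-x_2\cos(\pi/m))$ and $\deg q\leq m-3$, giving $\dim V_m=\binom{m-1}{2}=\dim W_m$; clearly $L$ sends $V_m$ into $W_m$. The heart of (a) is to show that $L\colon V_m\to W_m$ is an isomorphism, whence $r_m:=L^{-1}(-Lu_{\pi/m})\in V_m$ solves the problem. For injectivity, suppose $p\in V_m$ satisfies $Lp=0$ with $\deg p=d\leq m-1$; comparing the leading homogeneous components in $Lp=0$ yields $\Delta p_d=0$, so the top part $p_d$ is a classically harmonic polynomial of degree $d$ vanishing on $\partial K_{\pi/m}$. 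Such a polynomial lies in the two-dimensional space spanned by $r^d\cos(d\varphi)$ and $r^d\sin(d\varphi)$: vanishing at $\varphi=0$ kills the cosine, and vanishing at $\varphi=\pi/m$ forces $\sin(d\pi/m)=0$, i.e.\ $m\mid d$, contradicting $1\leq d\leq m-1$ (the cases $d\in\{0,1\}$ being excluded directly). Hence $p=0$, and by equal dimensions $L$ is bijective.

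\textbf{Part (b).} Each boundary ray of $K_\alpha$ contains infinitely many collinear lattice points and $h$ is polynomial, so $h\vert_{R\cap\partial K_\alpha}=0$ upgrades to $h\vert_{\partial K_\alpha}=0$, and then each homogeneous component $h_k$ of $h$ also vanishes on $\partial K_\alpha$. The crux is to show that the top piece $h_m$ is classically harmonic. Using the killed-walk harmonicity at $x\in K_\alpha\cap R$ one has $Lh(x)=\E[h(x+X);\,x+X\notin K_\alpha]$. On one hand $Lh$ is a polynomial of degree at most $m-2$ with leading part $\tfrac12\Delta h_m$. On the other hand, for $x$ in a subcone bounded away from $\partial K_\alpha$ the distance to the boundary is comparable to $|x|$, so Chebyshev (using that $\E|X|^m$ must be finite for $Lh$ to be well-defined) combined with $|h(x+X)|\leq C(|x|^m+|X|^m)$ gives $|Lh(x)|=O(1)$ uniformly as $|x|\to\infty$ in that subcone. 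A polynomial bounded on an open cone must be constant, so $Lh$ is constant; in particular its degree-$(m-2)$ part $\tfrac12\Delta h_m$ vanishes for $m\geq3$. The case $m=2$ is handled directly, since any degree-$2$ polynomial vanishing on $\partial K_\alpha$ is a scalar multiple of the product of the two linear forms defining $\partial K_\alpha$, and a direct computation using \eqref{eq:1_and_2-moments} then forces $\alpha=\pi/2$.

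Once $h_m$ is classically harmonic of degree $m$, vanishing on $\partial K_\alpha$ and positive inside, the same classification used in (a) yields $h_m=c\cdot r^m\sin(m\varphi)$ with $\sin(m\alpha)=0$, so $\alpha=j\pi/m$ for some integer $j\geq1$. Positivity on $(0,\alpha)$ excludes $j\geq2$ since $\sin(j\pi\varphi/\alpha)$ changes sign there, forcing $j=1$, i.e.\ $\alpha=\pi/m$ and $h_m=c\,u_{\pi/m}$. Then $h-c\,u_{\pi/m}$ has degree at most $m-1$, completing (b). The main obstacle I expect is the uniform estimate $|Lh(x)|=O(1)$ on a subcone in (b), which is what turns the probabilistic harmonicity into a clean polynomial identity and requires careful tail bounds on $X$; by contrast (a) reduces to dimension counting together with the classification of homogeneous harmonic polynomials on the wedge $K_{\pi/m}$, which is simultaneously the source of the restriction $\alpha=\pi/m$ and the engine of the injectivity argument.
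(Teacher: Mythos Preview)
Your proof is correct and follows the same overall strategy as the paper: in (a), construct $h$ with $L_Xh=0$ on all of $\R^2$ and $h\vert_{\partial K_{\pi/m}}=0$, then invoke \eqref{eq:no-overshoot}; in (b), extract $\Delta h_m=0$ from the harmonicity and classify the top homogeneous piece.

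The packaging differs in two places. For (a), the paper builds $r_m$ recursively degree by degree, solving at each stage the linear system governed by the matrix $M_{l,b}$ of Proposition~\ref{mx_uq}; your single dimension-count argument ($\dim V_m=\dim W_m$ together with injectivity of $L$) is cleaner but less explicit, and both rest on the same key fact---your injectivity step, the paper's Corollary~\ref{nabla_ker}---that a homogeneous harmonic polynomial of degree $d<m$ vanishing on $\partial K_{\pi/m}$ must be zero. (Minor point: your injectivity argument should say explicitly that once $p_d=0$ one iterates on the new top degree.) For (b), your Chebyshev tail-bound detour is unnecessary: you have already upgraded to $h\vert_{\partial K_\alpha}=0$, and \eqref{eq:no-overshoot} says that for $x\in R\cap K_\alpha$ the event $\{x+X\notin K_\alpha\}$ forces $x+X\in\partial K_\alpha$, so $\E[h(x+X);x+X\notin K_\alpha]=0$ identically. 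Hence $L_Xh(x)=0$ on $R\cap K_\alpha$ with no estimate required, and then Lemma~\ref{lem:f=zero} gives $L_Xh\equiv0$. This is exactly the route the paper takes before applying Corollary~\ref{nabla_n}. In particular, what you flag as the ``main obstacle'' dissolves once you use the no-overshoot hypothesis a second time; this also removes the slight awkwardness in your $m=2$ case, where ``$Lh$ constant'' alone would not force $\cos\alpha=0$ but $Lh\equiv0$ does.
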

\begin{remark}
Zero boundary conditions imply that the polynomial $h(x)$ is divisible by $x_2(x_2-x_1\tan\alpha)$.
\hfill $\diamond$
\end{remark}
One of the main reasons for us to look at wedges $K_\alpha$ is the connection with combinatorics of lattice walks in the positive quadrant. Let $\{Y(n)\}$ be independent copies of a $\Z^2$-valued vector $Y$ with $\E Y=0$ and with finite second moments. Consider a random walk $U(n)=Y(1)+Y(2)+\ldots+Y(n)$ which is killed at leaving the positive quadrant $\R^2_+$. Let $\varrho$ denote the correlation coefficient of the coordinates $Y_1$ and $Y_2$, that is,
$$
\varrho=\frac{{\rm Cov}(Y_1,Y_2)}{\sqrt{\E Y_1^2\E Y_2^2}}\in(-1,1).
$$
(We exclude here the degenerate case $|\varrho|=1$.) Set
$$
T=
\left(
\begin{array}{cc}
\frac{1}{\sqrt{\E Y_1^2(1-\varrho^2)}}, &\frac{-\varrho}{\sqrt{\E Y_2^2(1-\rho^2)}}\\
0, &\frac{1}{\sqrt{\E Y_2^2}}
\end{array}
\right).
$$
It is then easy to see that the vector $X=TY$ satisfies \eqref{eq:1_and_2-moments}.
Furthermore,
$$
T\R_+^2=K_{\alpha(\varrho)}
\quad\text{with}\quad
\alpha(\varrho)=\arctan\left(\frac{\sqrt{1-\varrho^2}}{\varrho}\right).
$$
Therefore, we can apply first Theorem~\ref{thm:harmonic} to the walk
$S(n)=TU(n)$ killed at leaving $K_{\alpha(\varrho)}$. If there exists a harmonic polynomial for $S(n)$ then we can apply $T^{-1}$ to get a harmonic polynomial for $U(n)$.
We also notice that the condition \eqref{eq:no-overshoot} is fulfilled if the vector $Y$ satisfies the condition 
$$
\P(Y_1\ge-1,Y_2\ge-1)=1,
$$
which is a natural generalisation of the classical left-continuity property for one-dimensional random walks. 

At the first glance, our condition \eqref{eq:no-overshoot} looks very restrictive. But looking at one-dimensional walks, one can see that this condition is rather natural. In the one-dimensional case one has only two cones: $(0,\infty$ and $(-\infty,0)$. For definiteness we consider $K=(0,\infty)$. It is well-known that if $S(n)$ is a zero mean $1$-dimensional walk then the harmonic function for $S(n)$ killed at leaving $K$ is proportional to the renewal function  corresponding to weak descending ladder heights. Assume that this renewal function $H$ is linear, that is, $H(x)=ax+b$ for all $x=0,1,2,\ldots$. (Notice that the subadditivity of the renewal function implies that $b\ge a$.) Then, the generating function of its differences is simple to compute:
$$
\sum_{x=0}^\infty s^x(H(x)-H(x-1))
=b+a\frac{s}{1-s}.
$$
Let $g(s)$ denote the generating function of the first weak descending ladder height. Then 
$$
\frac{1}{1-g(s)}=b+a\frac{s}{1-s}.
$$
Solving this equation, we obtain 
$$
g(s)=\frac{(b-1)+(a+1-b)s}{b-(b-a)s}.
$$
It is now easy to see that there are only two following possibilities:
\begin{itemize}
\item if $b>a$ then the first ladder height has shifted geometric distribution,
\item if $b=a$ then $g(s)=\frac{b-1}{b}+\frac{1}{b}s$ and, consequently, the first ladder height has only two possible values.
\end{itemize}
The first case corresponds to the geometric distribution of the negative part of $X$ and the second one to the left-continuous random walk. So, the left-continuous walk is one of two cases when one gets a linear harmonic function.
These simple arguments lead to the following question. Can one introduce an appropriate $2$-dimensional analogue of walks with geometric left tails?  At the moment we have no idea how to do that.

Theorem~\ref{thm:harmonic}(a) claims the existence of a harmonic polynomial which possesses, as one can easily conclude, the following property:
$$
h(x)\sim u_{\pi/m}(x)\quad\text{as }\delta(x)\to\infty.
$$
A positive harmonic function $V(x)$ constructed in \cite{DW24} has the same asymptotic property:
$$
V(x)\sim u_{\pi/m}(x)\quad\text{as }\delta(x)\to\infty.
$$
Thus, it is rather natural to expect that the harmonic functions $h(x)$ and $V(x)$
should be equal. This is confirmed by the following result.
\begin{theorem}
\label{thm:uniqueness}
Under the conditions of Theorem~\ref{thm:harmonic}(a),
$$
h(x)=V(x)\quad\text{ for all }x\in K_{\pi/m}\cap R.
$$
In particular,
$h(x)\ge0$ for all $x\in K_{\pi/m}\cap R.$
\end{theorem}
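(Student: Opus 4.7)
The plan is to reduce the identity $h\equiv V$ to showing that a remainder expectation vanishes in the limit, exploiting the fact that the two functions share the same leading term $u_{\pi/m}$ at infinity. First I would recall that, in the construction of \cite{DW24} (following the template of \cite{DW15}), $V$ is obtained as the pointwise limit
\begin{equation*}
V(x)=\lim_{n\to\infty}\E\bigl[u_{\pi/m}(x+S(n));\,\tau_x>n\bigr],\qquad x\in K_{\pi/m}\cap R.
\end{equation*}
At the same time, $h=u_{\pi/m}+r_m$ is killed-harmonic with $h|_{\partial K_{\pi/m}}=0$ --- this vanishing is precisely the condition used to fix $r_m$ in the proof of Theorem~\ref{thm:harmonic}(a), cf.\ the Remark. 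The no-overshoot hypothesis \eqref{eq:no-overshoot} forces $x+S(\tau_x)\in\partial K_{\pi/m}$, so optional stopping applied to the martingale $n\mapsto h(x+S(n\wedge\tau_x))$ gives, for every $n\ge1$,
\begin{equation*}
h(x)=\E\bigl[h(x+S(n));\,\tau_x>n\bigr]=\E\bigl[u_{\pi/m}(x+S(n));\,\tau_x>n\bigr]+\E\bigl[r_m(x+S(n));\,\tau_x>n\bigr].
\end{equation*}

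Passing to the limit $n\to\infty$, the first expectation converges to $V(x)$ by the construction recalled above, so the task reduces to proving
\begin{equation*}
\lim_{n\to\infty}\E\bigl[r_m(x+S(n));\,\tau_x>n\bigr]=0.
\end{equation*}
Since $\deg r_m\le m-1$, one has $|r_m(y)|\le C(1+|y|^{m-1})$, so this will follow from the conditional moment estimate $\E[(1+|x+S(n)|^{m-1});\,\tau_x>n]\to 0$. Combining the tail asymptotic $\P(\tau_x>n)\asymp u_{\pi/m}(x)\,n^{-m/2}$ with the local/functional limit theorem for walks conditioned to stay in $K_{\pi/m}$, applied to the test function $z\mapsto|z|^{m-1}$, should yield $\E[|x+S(n)|^{m-1};\tau_x>n]=O\bigl(n^{(m-1)/2-m/2}\bigr)=O(n^{-1/2})$, which closes the argument. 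The positivity claim of the theorem is then immediate from $h=V\ge0$.

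The hard part will be justifying this moment estimate: the test function $|z|^{m-1}$ is unbounded and sits just below the critical exponent $m$ that governs the survival probability, so a direct appeal to weak convergence is insufficient. I would proceed by truncation, splitting the event $\{\tau_x>n\}$ at a threshold such as $|x+S(n)|\le A\sqrt{n}\log n$, controlling the bulk via the local limit theorem in the cone (as developed in the DW line of papers) and the tail via a Markov or maximal inequality --- both steps rely on the $m$-th moment hypothesis assumed in Theorem~\ref{thm:harmonic}(a). A slightly softer alternative, avoiding the local limit theorem, is to dominate $|h-V|$ directly: on $\{\delta(y)\ge R_\varepsilon\}$ one has $|h(y)-V(y)|\le\varepsilon\,u_{\pi/m}(y)+C(1+|y|^{m-1})$; the $\varepsilon u_{\pi/m}$ contribution is absorbed by the uniform bound $\sup_n\E[u_{\pi/m}(x+S(n));\tau_x>n]<\infty$ coming from the limit defining $V$, and letting $\varepsilon\downarrow0$ after $n\to\infty$ yields the claim.
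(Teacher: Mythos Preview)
Your strategy differs from the paper's and, while plausible, leaves the hardest step unjustified. You start from the limit formula $V(x)=\lim_n\E[u_{\pi/m}(x+S(n));\tau_x>n]$ and optional stopping for $h$, reducing to $\E[r_m(x+S(n));\tau_x>n]\to0$. But you bound $r_m$ only by $C(1+|y|^{m-1})$, which forces you to prove the critical-exponent estimate $\E[|x+S(n)|^{m-1};\tau_x>n]\to0$. Your truncation sketch does not close: on the large-deviation piece $\{|S(n)|>n^{1/2+\varepsilon}\}$ you cannot simply drop $\{\tau_x>n\}$, since under only $m$-th moments one has $\E[|S(n)|^{m-1};|S(n)|>n^{1/2+\varepsilon}]\asymp n^{(m-1)/2}$, which diverges. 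Making this work would require Fuk--Nagaev-type inequalities combined with the constraint $\tau_x>n$, none of which you spell out. You also overlook the sharper bound available from the boundary condition: since $r_m=h-u_{\pi/m}$ vanishes on $\partial K_{\pi/m}$, it is divisible by $x_2(x_2-x_1\tan(\pi/m))$, giving $|r_m(y)|\le C\delta(y)|y|^{m-2}\le Cu_{\pi/m}(y)/|y|$; with this, the claim follows much more directly from $\sup_n\E[u_{\pi/m}(x+S(n));\tau_x>n]<\infty$.

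The paper avoids the moment estimate altogether by a Doob-transform argument. One passes to the measure $\widehat{\P}_x$ obtained by the $h$-transform with respect to $V$, under which $W_x(n):=h(x+S(n))/V(x+S(n))$ is a martingale. The crucial point is that this martingale is \emph{bounded}: Lemma~\ref{h_assympt}(a) gives $|h(y)|\le C\delta(y)|y|^{m-1}$, and a matching lower bound for $V$ (Lemma~4 of \cite{EW25}) yields $\sup_y |h(y)|/V(y)<\infty$. Then one only needs $W_x(n)\to1$ in $\widehat{\P}_x$-probability, which follows by splitting on $\{\delta(x+S(n))\le\log^2 n\}$: the small-distance part has vanishing $\widehat{\P}_x$-probability (Proposition~3 of \cite{EW25}), and on the complement both $h$ and $V$ are asymptotic to $u_{\pi/m}$. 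Bounded convergence finishes the proof, with a separate one-line argument for the possible exceptional lattice point where $V=0$. The Doob-transform route trades your unbounded moment estimate for a pointwise ratio bound, which is what the paper's Lemma~\ref{h_assympt} is designed to supply.
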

Results similar to our Theorems~\eqref{thm:harmonic} and~\eqref{thm:uniqueness} have recently been obtained by Hubert and Raschel in \cite{HR25}. They have proved claims in Theorem~\ref{thm:harmonic}(a) and in Theorem~\eqref{thm:uniqueness} by using a quite different method, which seems to be less constructive. Our approach is, on contrary, algorithmic and allows one to compute harmonic polynomials by using the standard methods from the linear algebra only. A further difference to \cite{HR25} are moment conditions imposed on the random variable $X$: Hubert and Raschel assume that $X$ has finite number of jumps and we require only moments of order $n$ when working in the cone $K_{\pi/n}$. It should also be mentioned that \cite{HR25} contains a much stronger statement than our Theorem~\ref{thm:harmonic}(b): it is proved there that if there exists an asymptotically positive harmonic polynomial for a random walk killed at leaving a $d$-dimensional cone $K$ then $K$ is a Weyl chamber of a finite Coxeter group.

The methods, which we have used to derive properties of harmonic functions, may be used to derive explicit expressions for the moments of exit times $\tau_x$.
\begin{theorem}
\label{thm:moments}
Assume that the random variable $X$ satisfies the conditions \eqref{eq:1_and_2-moments} and \eqref{eq:no-overshoot}. For every integer $k<\frac{\pi}{2\alpha}$, the function $x\mapsto\E[\tau_x^k]$ is a polynomial of degree $2k$. In particular, 
$$
\E[\tau_x]=x_2(x_1\tan\alpha-x_2).
$$
All higher moments can be computed recursively.
\end{theorem}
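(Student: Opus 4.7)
The plan is to find explicit polynomial solutions to a Poisson-type recursion and to identify them with the moments of $\tau_x$ via optional stopping.

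\emph{Functional equation.} Setting $f_k(x):=\E_x[\tau^k]$ on $K_\alpha\cap R$ and extending by $0$ outside, conditioning on the first step and expanding $(1+\tau_{x+X})^k$ by the binomial theorem yields
\begin{equation*}
Lf_k(x):=f_k(x)-\E[f_k(x+X)] = 1+\sum_{j=1}^{k-1}\binom{k}{j}\E[f_j(x+X)],\qquad x\in K_\alpha\cap R,
\end{equation*}
with $f_k|_{\partial K_\alpha\cap R}=0$. The essential observation is that, under \eqref{eq:no-overshoot}, if $h_j$ is a polynomial divisible by $\ell(x):=x_2(x_2-x_1\tan\alpha)$, then for $x\in K_\alpha$ the walk cannot jump over $\partial K_\alpha$, so $h_j(x+X)\mathbf{1}_{\{x+X\notin K_\alpha\}}=0$ almost surely. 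Hence $\E[h_j(x+X)]=\E[f_j(x+X)]$ whenever $h_j=f_j$ on $K_\alpha$, and the recursion can be promoted to the polynomial identity on all of $\R^2$:
\begin{equation*}
Lh_k(x) = 1+\sum_{j=1}^{k-1}\binom{k}{j}\E[h_j(x+X)].
\end{equation*}

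\emph{Construction and uniqueness.} By Taylor expansion and \eqref{eq:1_and_2-moments}, $Lp(x)=-\tfrac12\Delta p(x)+(\text{terms with lower-order derivatives})$, so $L$ lowers polynomial degree by exactly two. Let $V_{2k}$ denote the space of polynomials of degree $\le 2k$ divisible by $\ell$, and $W_{2k-2}$ the polynomials of degree $\le 2k-2$; both have dimension $k(2k-1)$. The key lemma is that for $k<\pi/(2\alpha)$ the restriction $L\colon V_{2k}\to W_{2k-2}$ is a bijection. For injectivity, if $h\in V_{2k}$ with $Lh=0$, the top homogeneous part $h_d$ of $h$ must be continuum-harmonic ($\Delta h_d=0$) and vanish on both rays of $\partial K_\alpha$, forcing $d\alpha\in\pi\Z$ and hence $d\ge\pi/\alpha>2k$, a contradiction unless $h_d=0$; iterating lowers the degree until $h=0$. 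Surjectivity follows from equal dimensions. Starting from $h_1(x)=x_2(x_1\tan\alpha-x_2)$ (which satisfies $Lh_1=1$ by direct computation from \eqref{eq:1_and_2-moments}), induction on $k$ produces a unique $h_k\in V_{2k}$, of degree exactly $2k$ since $L$ is non-degenerate on leading terms.

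\emph{Identification.} To conclude $h_k(x)=\E_x[\tau^k]$ for $x\in K_\alpha\cap R$, I would apply optional stopping to the martingale
\begin{equation*}
M_n = h_k(x+S(n\wedge\tau_x)) + \sum_{i=0}^{n\wedge\tau_x-1}Lh_k(x+S(i)),
\end{equation*}
whose martingale property follows from the polynomial identity just established. The boundary term $h_k(x+S(\tau_x))$ vanishes by no-overshoot and $h_k|_{\partial K_\alpha}=0$. Passing to $n\to\infty$ is justified by the classical tail bound $\P_x(\tau_x>n)=O(n^{-\pi/(2\alpha)})$ from \cite{DW15,DW19,DW24} combined with $\E|S(n)|^{2k}=O(n^k)$: the remainder $\E[|h_k(x+S(n))|\mathbf{1}_{\{\tau_x>n\}}]$ is $O(n^{k-\pi/(2\alpha)})\to 0$ exactly when $k<\pi/(2\alpha)$. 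Unwinding the right-hand side using the inductive hypothesis $h_j=f_j$ on $K_\alpha$ for $j<k$ together with the strong Markov property identifies $h_k$ with $\E_x[\tau^k]$. The main obstacle is the injectivity step in the key lemma: the threshold $k<\pi/(2\alpha)$ enters precisely through the minimal degree of continuum-harmonic polynomials vanishing on $\partial K_\alpha$, directly mirroring the non-existence statement of Theorem~\ref{thm:harmonic}(b).
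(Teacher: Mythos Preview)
Your construction of the polynomial $h_k$ is correct and is essentially the content of the paper's Proposition~\ref{prop:moments}; your injectivity argument via the minimal degree of harmonic homogeneous polynomials vanishing on $\partial K_\alpha$ is the same mechanism as the matrix computation there. The functional equation and its promotion to a polynomial identity under \eqref{eq:no-overshoot} are also right, and the ``unwinding'' of the compensator sum does indeed produce $\E[\tau_x^k]$ once one uses the Markov property and the elementary identity $n^k=\sum_{i=0}^{n-1}\sum_{j=0}^{k-1}\binom{k}{j}i^j$.

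The gap is in the identification step. You assert that
$\E\bigl[|h_k(x+S(n))|\mathbf{1}_{\{\tau_x>n\}}\bigr]=O(n^{k-\pi/(2\alpha)})$
by ``combining'' the tail bound $\P(\tau_x>n)=O(n^{-\pi/(2\alpha)})$ with $\E|S(n)|^{2k}=O(n^k)$, but one cannot simply multiply these: the random variables $|S(n)|^{2k}$ and $\mathbf{1}_{\{\tau_x>n\}}$ are correlated. A H\"older inequality yields only $O(n^{k-\pi/(2\alpha q)})$ for conjugate exponents $p,q$, and sending $q\downarrow1$ to recover your claimed rate would require $\E|X|^{2kp}<\infty$ for arbitrarily large $p$, which is not among the hypotheses. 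The estimate you want is in fact true, but establishing it needs a genuine input about the walk conditioned to stay in the cone---for instance the truncation device the paper deploys in its final Corollary via Lemma~9 of \cite{EW25}. The paper's own identification avoids this remainder estimate altogether: it observes that $h_k-\E[\tau_\cdot^{\,k}]$ is harmonic for the killed walk, passes to the Doob $h$-transform with respect to the positive harmonic function $V$, and uses the a~priori bound of Lemma~\ref{E_tau_assympt} together with Lemma~4 and Proposition~3 of \cite{EW25} to conclude that the bounded $\widehat{\P}_x$-martingale $(h_k-\E[\tau_\cdot^{\,k}])/V$ tends to zero.
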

This result allows one to determine also the moments of $S(\tau_x)$. At the end of the paper we show how to derive explicit expressions for the first two moments of $S(\tau_x)$ from Theorem~\ref{thm:moments}.
\section{Proof of Theorem~\ref{thm:harmonic}}
\subsection{Preliminary results.}
We start by recalling some properties of positive harmonic functions for killed Brownian motions.
\begin{lemma}
\label{lem:BM1}
For every $\alpha\in(0,\pi)$, the function
$u_\alpha(x_1, x_2) := \Im(x_1 + i x_2)^\frac{\pi}{\alpha}$
is positive in $K_\alpha$ and harmonic for the Brownian motion killed at leaving 
$K_\alpha$, that is,
\begin{align}
\label{eq:BM1}
\Delta u_\alpha(x) = 0,
\quad x\in K_\alpha,\quad
u_\alpha\big|_{\partial K_\alpha} = 0.
\end{align}
Furthermore, for every $x\in K_\alpha$ one has
\begin{align}
\label{eq:BM2}
1\le\frac{u_{\alpha}(x)}{|x|^{\frac{\pi}{\alpha} - 1}\delta(x)}
\le \frac{\pi}{\alpha}.
\end{align}
\end{lemma}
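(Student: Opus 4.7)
The plan is to treat the three claims in turn, with the harmonicity and boundary behaviour following from complex analysis and the sharp two-sided bound reducing to an elementary trigonometric inequality.

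First, I would identify $u_\alpha$ as $\Im f(z)$ where $f(z)=z^{\pi/\alpha}$ is the principal branch on the sector $\{z=re^{i\varphi}:r>0,\ \varphi\in(0,\alpha)\}$. Since $f$ is holomorphic there, its imaginary part is harmonic, which gives $\Delta u_\alpha=0$ on $K_\alpha$. Switching to polar coordinates yields the explicit representation
$$
u_\alpha(r\cos\varphi,r\sin\varphi)=r^{\pi/\alpha}\sin(\pi\varphi/\alpha),
$$
from which positivity on $K_\alpha$ (because $\pi\varphi/\alpha\in(0,\pi)$) and the vanishing on $\partial K_\alpha$ (at $\varphi=0$ and $\varphi=\alpha$) are immediate. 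This settles \eqref{eq:BM1}.

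For \eqref{eq:BM2}, I would compute $\delta(x)$ explicitly: the two sides of $K_\alpha$ are straight rays emanating from the origin, so
$$
\delta(r\cos\varphi,r\sin\varphi)=r\min\bigl(\sin\varphi,\sin(\alpha-\varphi)\bigr).
$$
Combined with $|x|=r$, the quantity to be estimated becomes
$$
\frac{u_\alpha(x)}{|x|^{\pi/\alpha-1}\delta(x)}
=\frac{\sin(\pi\varphi/\alpha)}{\min\bigl(\sin\varphi,\sin(\alpha-\varphi)\bigr)}.
$$
Because $u_\alpha$ and the expression $\min(\sin\varphi,\sin(\alpha-\varphi))$ are both invariant under $\varphi\mapsto\alpha-\varphi$, I may restrict to $\varphi\in(0,\alpha/2]$, in which case the minimum equals $\sin\varphi$. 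The bound \eqref{eq:BM2} thus reduces to
$$
\sin\varphi\le \sin(\pi\varphi/\alpha)\le \tfrac{\pi}{\alpha}\sin\varphi,
\qquad\varphi\in(0,\alpha/2].
$$

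The left inequality follows from $\pi\varphi/\alpha\ge\varphi$ (since $\alpha\le\pi$) together with the monotonicity of sine on $[0,\pi/2]$, noting that both $\varphi$ and $\pi\varphi/\alpha$ lie in $(0,\pi/2]$ when $\varphi\le\alpha/2$. For the right inequality I would introduce
$$
g(\varphi):=\tfrac{\pi}{\alpha}\sin\varphi-\sin(\pi\varphi/\alpha),
$$
observe $g(0)=0$, and differentiate to obtain
$$
g'(\varphi)=\tfrac{\pi}{\alpha}\bigl(\cos\varphi-\cos(\pi\varphi/\alpha)\bigr)\ge0,
$$
where the sign comes from $\cos$ being decreasing on $[0,\pi/2]$ and $\varphi\le\pi\varphi/\alpha$. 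Hence $g\ge0$ on the interval in question. I do not anticipate a real obstacle here; the only mildly delicate point is justifying the reduction by symmetry and checking that $\pi\varphi/\alpha$ stays in $(0,\pi/2]$ on $(0,\alpha/2]$, which is automatic.
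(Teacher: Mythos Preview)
Your proof is correct and follows essentially the same route as the paper's: both use holomorphy of $z^{\pi/\alpha}$ for harmonicity, the polar representation for positivity and boundary vanishing, and reduce \eqref{eq:BM2} to the elementary inequality $\sin\varphi\le\sin(\pi\varphi/\alpha)\le\tfrac{\pi}{\alpha}\sin\varphi$ on $(0,\alpha/2]$. The only cosmetic difference is in the upper bound, where the paper invokes concavity of $\sin$ on $[0,\pi/2]$ (equivalently, that $\sin t/t$ is decreasing) while you argue via the sign of $g'(\varphi)=\tfrac{\pi}{\alpha}(\cos\varphi-\cos(\pi\varphi/\alpha))$; these are equivalent one-line arguments.
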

\begin{proof}
We give a proof of these results just to have a self-contained presentation.  

As the function $z^\frac{\pi}{\alpha}$ in analytical on $\C\setminus\{0\}$,
$\Im z^\frac{\pi}{\alpha}$ and $\Re z^\frac{\pi}{\alpha}$ are harmonic in
the classical sense:
        \begin{align*}
                \Delta \Im (x_1+ix_2)^\frac{\pi}{\alpha} 
                = \Delta \Re(x_1+ix_2)^\frac{\pi}{\alpha}= 0.
        \end{align*}
        Moreover, for all $t\ge 0$,
        \begin{align*}
                \Im(t+i0)^\frac{\pi}{\alpha} = 0,
                \quad 
                \Im\left(t\cos{\alpha} + it\sin{\alpha}\right)^{\frac{\pi}{\alpha}} = 0.
        \end{align*}
        Consequently, \eqref{eq:BM1} holds. 
        We next notice that $u_{\alpha}(x) > 0$ for all $x\in K_{\alpha}.$ 
        Indeed, for  $x\in \K_{\alpha}$ one has $0 < \mathrm{arg}\:(x_1 + i x_2) < \alpha$ which leads to $0 < \frac{\pi}{\alpha} 
        \arg\: (x_1 + i x_2) < \pi.$ Thus, $u_{\alpha}$ is positive in $K_{\alpha}.$

To prove \eqref{eq:BM2} we write $u_{\alpha}(x)$ in polar coordinates: 
        \begin{align*}
                u_{\alpha}(x) = \Im(x_1 + i x_2)^\frac{\pi}{\alpha} = \Im(re^{i\beta})^\frac{\pi}{\alpha} = 
                r^\frac{\pi}{\alpha}\sin\left({\frac{\pi\beta}{\alpha}}\right),
        \end{align*}
        where $r = |x_1+ix_2|$ and $\beta = \arg{(x_1+ix_2)}.$
        Then 
        \begin{align*}
                \frac{u_{\alpha}(x_1,x_2)}{r^{\frac{\pi}{\alpha} - 1}\delta(x)} = \frac{r\sin\left(\frac{\pi\beta}{\alpha}\right)}
                {\delta(x)}.
        \end{align*}
        If $\beta \le \alpha/2$ then $\delta(x) = r\sin{\beta}$ and,
        consequently,
        \begin{align*}
                \frac{r\sin{\frac{\pi\beta}{\alpha}}}{\delta(x)} = \frac{\sin{\frac{\pi\beta}{\alpha}}}{\sin{\beta}}.
        \end{align*}
        For $0 \le \beta \le \alpha/2$ we have 
        $0 \le \frac{\pi\beta}{\alpha} \le \pi/2.$ Since $\sin\varphi$
        increases on $[0,\pi/2]$, 
        \begin{align*}
            \frac{\sin{\frac{\pi\beta}{\alpha}}}{\sin{\beta}} \ge 1. 
        \end{align*}
        To obtain the upper bound in \eqref{eq:BM2} for $\beta\le\alpha/2$, we notice that, due to the concavity of $\sin\varphi$ on $[0,\pi/2]$ for the line 
        $x_2 = \frac{\sin\beta}{\beta}x_1$,
        $$
        \sin\left(\frac{\pi}{\alpha}\beta\right)
        \le\frac{\pi}{\alpha}\sin\beta,\quad \beta\le\frac{\alpha}{2}.
        $$

        If $\beta \ge \alpha/2$ then $\delta(x) = r\sin{\bigl(\alpha - \beta\bigr)}.$ Consequently,
        \begin{align*}
                \frac{r\sin{\frac{\pi\beta}{\alpha}}}{r\sin{\bigl(\alpha - \beta\bigr)}} = 
                \frac{\sin{(\pi - \frac{\pi\beta}{\alpha})}}{\sin{\bigl(\alpha - \beta\bigr)}} =
                \frac{\sin{\frac{\pi}{\alpha}(\alpha - \beta)}}{\sin{\bigl(\alpha - \beta\bigr)}}.
        \end{align*}
        Setting now $\beta'=\alpha-\beta$ we reduce the problem to the already considered case $\beta\le\alpha/2$. Thus, the proof of the lemma is complete.
\end{proof}
\begin{lemma}\label{lem:f=zero}
    Let $f(x_1,x_2)$ be a polynomial. If $f(x)=0$ for all $x\in R\cap K_\alpha$ with some $\alpha\in(0,2\pi)$ then $f(x)=0$ for all $x\in\R^2$.
\end{lemma}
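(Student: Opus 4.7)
The plan is to reduce the statement to the classical fact that a polynomial in two variables which vanishes on infinitely many distinct parallel lines must vanish identically. The work is to exhibit, from the set $R\cap K_\alpha$ alone, infinitely many parallel lines each carrying infinitely many points of $R\cap K_\alpha$.

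First I would pick a lattice vector $v\in R$ lying in the interior of $K_\alpha$. Such a $v$ exists because $K_\alpha$ is an open cone and therefore contains open balls of arbitrarily large radius (take points of a fixed interior ray far from the apex); once the radius of such a ball exceeds the diameter of a fundamental parallelogram of $R$, the ball must contain a lattice point. Then let $w\in R$ be any lattice vector not parallel to $v$, which exists because $R$ has rank $2$.

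Next I would show that for every integer $m$ large enough and every integer $k$ with $|k|\le cm$, where $c:=\delta(v)/|w|>0$ and $\delta(v)=\mathrm{dist}(v,\partial K_\alpha)$, the point $mv+kw$ lies in $K_\alpha$: by homogeneity $\delta(mv)=m\delta(v)$, and the perturbation $kw$ has norm at most $m\delta(v)$, so $mv+kw$ stays inside the cone. In particular, $mv+kw\in R\cap K_\alpha$, hence $f(mv+kw)=0$ by hypothesis. This makes $t\mapsto f(mv+tw)$ a univariate polynomial of degree at most $\deg f$ with at least $2\lfloor cm\rfloor+1$ integer zeros, and for $m$ large enough this exceeds $\deg f$, forcing $f(mv+tw)=0$ for every $t\in\R$. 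Thus $f$ vanishes on the full line $L_m=\{mv+tw:t\in\R\}$ for every $m\ge m_0$.

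Finally I would change coordinates: write a generic point of $\R^2$ as $sv+tw$ and set $g(s,t):=f(sv+tw)$, which is a polynomial in $(s,t)$. The previous step gives $g(m,t)=0$ for all $t\in\R$ and all integers $m\ge m_0$. For each fixed $t$, the polynomial $s\mapsto g(s,t)$ has infinitely many zeros and is therefore identically zero; consequently $g\equiv0$, and so $f\equiv0$ on $\R^2$. The only mildly non-trivial step is the existence of a lattice point in the interior of $K_\alpha$ (valid for all $\alpha\in(0,2\pi)$ because $K_\alpha$ has non-empty open interior); everything else is elementary polynomial interpolation.
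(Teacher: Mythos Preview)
Your proof is correct. One cosmetic point: since $K_\alpha$ is open, the inequality should be $|k|<cm$ (strict) to guarantee $mv+kw\in K_\alpha$; this does not change the count of admissible integers $k$ for large $m$, so the argument is unaffected.

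Your route is genuinely different from the paper's. The paper decomposes $f$ into homogeneous components $f=f_n+\cdots+f_0$, writes the top part in polar coordinates as $f_n(r\cos\beta,r\sin\beta)=r^n q_n(\beta)$, and argues that if $q_n(\beta_0)\neq0$ for some $\beta_0\in(0,\alpha)$ then $f$ itself tends to $+\infty$ along a small sector around $\beta_0$, contradicting the vanishing on the unbounded set $R\cap K_\alpha$ inside that sector; once $f_n\equiv0$ one iterates with $f-f_n$. You instead produce, for every large $m$, more than $\deg f$ lattice points on the line $mv+\R w$ lying in the cone, kill $f$ on each such line by one-variable interpolation, and then kill $f$ globally by a second interpolation in the $v$-direction. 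Your argument is more elementary and entirely self-contained (no polar coordinates, no homogeneous decomposition), while the paper's argument is more in keeping with the rest of the article, where homogeneous components are used systematically; both work for the full range $\alpha\in(0,2\pi)$.
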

\begin{proof}
    Let $n$ denote the degree of $f$. We can represent $f$ as the sum of homogeneous polynomials:
    $$
    f=f_n+f_{n-1}+\ldots+f_0,
    $$
    where every $f_k$ is a homogeneous polynomial of degree $k$. 
    The boundary condition for $f$ implies that $f_k(x)=0$ for all 
    $x\in R\cap K_\alpha$ and for every $k$.
    
    Clearly, it suffices to show that $f_n\equiv0$. Writing $f_n$ in polar coordinates, we have 
    $$
    f_n(r\cos\beta,r\sin\beta)=r^n q_n(\beta).
    $$
    Thus, we need to show that $q_n\equiv0$. Assume that, on contrary, there exists $0 < \beta_0 < \alpha$ such that $q_{n}(\beta_0) \ne 0.$ Without loss of generality, we assume that $q_{n}(\beta_0) > 0$. Since $f_n$ is a polynomial,
    the function $q_n$ is continuous and, consequently, there exists $\varepsilon>0$ such that 
    $$
    q_n(\beta)>\frac{q_n(\beta_0)}{2}
    \quad\text{for all }\beta\in[\beta_0-\varepsilon,\beta_0+\varepsilon].
    $$
    Therefore, $f(r\cos\beta,r\sin\beta)\to\infty$ as $r\to\infty$ uniformly in 
    $\beta\in[\beta_0-\varepsilon,\beta_0+\varepsilon]$. This contradicts the observation $f_n(x)=0$ for all $x\in R\cap K_\alpha$. Thus, $f_n(x)=0$ for
    all $x\in K_\alpha$. Since $f_n$ is polynomial, we conclude that $f\equiv0$.
\end{proof}

For every random vector $X$ we define operator $L_X$ by letting
$$
L_X\phi(x):=\E[\phi(x+X)]-\phi(x),
$$
here we may plug any function $\phi$ such that $\E[|\phi(x+X)|]<\infty$ for all $x$.
\begin{lemma}\label{taylor_exp}
        Assume that $X$ satisfies \eqref{eq:1_and_2-moments} and that 
        $\E|X|^n$ is finite. Then for every polynomial $f$ of degree $n$ there exists a polynomial $g_f$ of degree $n-3$ such that 
        \begin{align*}
                L_Xf(x) = \frac{1}{2}\Delta f(x) + g_f(x),\quad x\in\R^2.
        \end{align*}
        The coefficients of $g_f$ are expectations of some 
        polynomials of $X_1$ and $X_2$.
\end{lemma}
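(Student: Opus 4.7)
The plan is to Taylor-expand $f(x+X)$ around $x$. Since $f$ is a polynomial of degree $n$, all partial derivatives of order exceeding $n$ vanish, so the multivariate Taylor formula terminates without remainder:
\begin{align*}
f(x+X) = \sum_{k=0}^{n}\frac{1}{k!}\sum_{i+j=k}\binom{k}{i}\, X_1^i X_2^j\, \partial_1^i\partial_2^j f(x).
\end{align*}
Each summand is integrable: $\partial_1^i\partial_2^j f(x)$ is a deterministic polynomial in $x$, and $|X_1^i X_2^j|\le|X|^{i+j}\le 1+|X|^n$, so the hypothesis $\E|X|^n<\infty$ gives $\E|X_1^i X_2^j|<\infty$ for every $(i,j)$ with $i+j\le n$. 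This is where the moment assumption enters.

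Taking expectations term by term and subtracting $f(x)$, I obtain
\begin{align*}
L_X f(x) = \sum_{k=1}^{n}\frac{1}{k!}\sum_{i+j=k}\binom{k}{i}\,\E[X_1^i X_2^j]\, \partial_1^i\partial_2^j f(x).
\end{align*}
The $k=1$ contribution vanishes because $\E X_1=\E X_2=0$. The $k=2$ contribution, using \eqref{eq:1_and_2-moments}, equals
\begin{align*}
\tfrac{1}{2}\bigl(\E X_1^2\,\partial_1^2 f(x) + 2\E[X_1X_2]\,\partial_1\partial_2 f(x) + \E X_2^2\,\partial_2^2 f(x)\bigr) = \tfrac{1}{2}\Delta f(x).
\end{align*}

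The remaining contribution, which I take as the definition of $g_f$, is
\begin{align*}
g_f(x) := \sum_{k=3}^{n}\frac{1}{k!}\sum_{i+j=k}\binom{k}{i}\,\E[X_1^i X_2^j]\, \partial_1^i\partial_2^j f(x).
\end{align*}
Because $\partial_1^i\partial_2^j$ with $i+j=k$ lowers the degree of $f$ by $k$, every summand with $k\ge 3$ is a polynomial in $x$ of degree at most $n-3$, hence so is $g_f$. Its coefficients are, by construction, linear combinations of the mixed moments $\E[X_1^i X_2^j]$ with $3\le i+j\le n$, i.e.\ expectations of monomials in $X_1, X_2$, as required.

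I do not anticipate any real obstacle in this proof: the whole argument is a direct application of the finite multivariate Taylor formula combined with linearity of expectation. The only point that needs a brief justification is the integrability of the terms in the expansion, which is precisely what the moment assumption $\E|X|^n<\infty$ is tailored to ensure.
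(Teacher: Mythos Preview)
Your proof is correct and follows essentially the same approach as the paper: a finite Taylor expansion of $f(x+X)$, termwise expectation justified by $\E|X|^n<\infty$, identification of the $k=1$ and $k=2$ contributions via \eqref{eq:1_and_2-moments}, and the remaining $k\ge 3$ terms forming $g_f$ of degree at most $n-3$. The only difference is notational.
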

\begin{proof}
        By the Taylor formula,
        \begin{align*}
                f(x_1 + X_1, &x_2 + X_2) - f(x_1, x_2) = \frac{\partial f}{\partial x_1}
                (x_1, x_2) X_1 + \frac{\partial f}{\partial x_2}(x_1, x_2) X_2 \\
                &+ \frac{1}{2} \left(\frac{\partial^2 f}{\partial x_1^2}(x_1, x_2) X_1^2 + 2 
                \frac{\partial^2 f}{\partial x_1 \partial x_2}(x_1, x_2) X_1 X_2 + \frac{\partial^2 f}
                {\partial x_2^2}(x_1, x_2) X_2^2 \right) 
                \\
                &+ \sum\limits_{k + l > 2}
                \frac{1}{k!l!}\frac{\partial^{k + l}f}{\partial x_1^k\partial x_2^l}(x_1, x_2)X_1^k X_2^l.
        \end{align*}
        By the assumption on $f$, the sum in the previous line contains finitely many elements: all mixed derivatives with $k+l>n$ are zeros. Then, the assumption
        $\E|X|^n$ allows us to take mathematical expectations. As a result we obtain 
        \begin{align*}
                \E\bigl[f(x_1 + X_1, &x_2 + X_2) - f(x_1, x_2)\bigr] = \E\Bigl[\frac{\partial f}{\partial x_1}
                (x_1, x_2) X_1 + \frac{\partial f}{\partial x_2}(x_1, x_2) X_2 \\
                &+ \frac{1}{2} \left(\frac{\partial^2 f}{\partial x_1^2}(x_1, x_2) X_1^2 + 2 
                \frac{\partial^2 f}{\partial x_1 \partial x_2}(x_1, x_2) X_1 X_2 + \frac{\partial^2 f}
                {\partial x_2^2}(x_1, x_2) X_2^2 \right) 
                \\
                &+ \sum\limits_{3\le k + l\le n}
                \frac{1}{k!l!}\frac{\partial^{k + l}f}{\partial x_1^k\partial x_2^l}(x_1, x_2)X_1^k X_2^l\Bigr].
        \end{align*}
        According to condition \eqref{eq:1_and_2-moments}, 
        \begin{align*}
                \E X_1 = \E X_2 = 0,\quad\E X_1^2 = \E X_2^2 = 1\text{ and }\E X_1 X_2 = 0.
        \end{align*}
        Thus
        \begin{align*}
        \E\left[\frac{\partial f}{\partial x_1}(x_1, x_2) X_1 
          + \frac{\partial f}{\partial x_2}(x_1, x_2) X_2\right] = 0 
        \end{align*}
        and 
        \begin{align*}
                \E\left[\frac{\partial^2 f}{\partial x_1^2}(x_1, x_2) X_1^2 + 2 \frac{\partial^2 f}{\partial 
                x_1 \partial x_2}(x_1, x_2) X_1 X_2 + \frac{\partial^2 f}{\partial x_2^2}(x_1, x_2) X_2^2 
                \right] = \Delta f(x_1,x_2).
        \end{align*}
        Consequently,
        \begin{align*}
        L_Xf(x)=\frac{1}{2}\Delta f(x)+   
        \sum\limits_{3\le k + l\le n}
                \frac{1}{k!l!}\frac{\partial^{k + l}f}{\partial x_1^k\partial x_2^l}(x_1, x_2)\E\left[X_1^k X_2^l\right]
        \end{align*}
        Noticing that
        \begin{align*}
                \frac{\partial^{k + l}f}{\partial x_1^k\partial x_2^l}(x_1, x_2)
        \end{align*}
        is polynomial of degree $n-3$ for all $k + l \ge 3$, we complete the proof of the lemma.
\end{proof}
\begin{corollary}\label{nabla_n}
Let the conditions of Lemma~\ref{taylor_exp} be valid. Consider the unique decomposition of the polynomial $f$ into homogeneous components:
$$
f(x)=\sum_{j=0}^n f_j(x),
$$
where every polynomial $f_j$ is homogeneous of degree $j$.\\
If $L_Xf(x)=0$ for all $x\in R\cap K_\alpha$ then $\Delta f_n\equiv0$.
\end{corollary}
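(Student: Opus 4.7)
The proof is essentially a direct chaining of the two preceding lemmas, so there is no serious obstacle; the plan is just to make the degree bookkeeping transparent.

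First, I would apply Lemma~\ref{taylor_exp} to the polynomial $f$ of degree $n$, obtaining
$$
L_X f(x) = \tfrac{1}{2}\Delta f(x) + g_f(x), \qquad x \in \R^2,
$$
where $g_f$ is a polynomial of degree at most $n-3$. By hypothesis the left-hand side vanishes on $R \cap K_\alpha$, so the polynomial $P(x) := \tfrac{1}{2}\Delta f(x) + g_f(x)$ satisfies $P(x) = 0$ for every $x \in R \cap K_\alpha$. Lemma~\ref{lem:f=zero} then upgrades this to $P \equiv 0$ on all of $\R^2$.

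Second, I would do the degree bookkeeping. Writing $f = \sum_{j=0}^n f_j$ with $f_j$ homogeneous of degree $j$, the Laplacian respects homogeneity: $\Delta f_j$ is homogeneous of degree $j-2$ (with the convention that $\Delta f_0 = \Delta f_1 = 0$). Hence
$$
\tfrac{1}{2}\Delta f(x) = \tfrac{1}{2}\Delta f_n(x) + \text{(terms of degree} \le n-3\text{)},
$$
and together with $g_f$ of degree $\le n-3$ the identity $P \equiv 0$ forces the homogeneous component of top degree $n-2$ to vanish. That component is exactly $\tfrac{1}{2}\Delta f_n$, so $\Delta f_n \equiv 0$, as required.

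The only point worth checking carefully is that $\Delta f_n$ is genuinely the unique top-degree component of $P$; this is immediate from the uniqueness of the decomposition of a polynomial into homogeneous pieces, combined with the strict degree bound $\deg g_f \le n-3$ provided by Lemma~\ref{taylor_exp}. No additional analytic input is needed, and the argument uses only the finiteness of $\E|X|^n$ inherited from the hypotheses of Lemma~\ref{taylor_exp}.
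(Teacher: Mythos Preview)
Your proof is correct and follows essentially the same approach as the paper: apply Lemma~\ref{taylor_exp} to write $L_Xf=\tfrac12\Delta f+g_f$, use Lemma~\ref{lem:f=zero} to upgrade vanishing on $R\cap K_\alpha$ to vanishing on all of $\R^2$, and then isolate the top homogeneous component $\tfrac12\Delta f_n$ by degree counting. The only cosmetic difference is the order in which you invoke the two lemmas.
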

\begin{proof}
        The claim is trivial if $f_n = 0.$ Assume now that $f_n \ne 0.$
        According to Lemma~\ref{lem:f=zero}, the polynomial $L_X f(x)=0$ for all $x\in\R^2$. Furthermore, by Lemma \ref{taylor_exp},
        \begin{align*}
                L_Xf(x) = \frac{1}{2}\Delta f(x) + g_f(x),
        \end{align*}
        where $g_f$ has degree not greater than $n - 3.$
        Using the equality $\Delta f = \sum\limits_{i = 0}^n \Delta f_i$ and noting that $\Delta f_j$ is homogeneous of degree $j-2$, we infer that
        $$
        L_Xf(x) = \frac{1}{2}\Delta f_n(x) + \bar{g}_f(x),
        $$
        where the degree of $\bar{g}_f$ does not exceed $n-3$. Combining this with the fact that $L_Xf(x)=0$ for all $x$, we conclude that $\Delta f_n(x)=0$
        for all $x$.
\end{proof}
\subsection{Reformulation of the problem in terms of linear equations.}
Let $\R_n[x_1, x_2]$ denote the linear space of homogeneous polynomials of degree $n$.
Let $\{v_{n,i}\}$ denote the standard basis in $\R_n[x_1, x_2]$, that is,
\begin{align*}
        v_{n, i} := x_2^i x_1^{n - i},\quad i = 0,1,\dots, n.
\end{align*}
We now consider $\frac{1}{2}\Delta$ as a linear operator from 
$\R_n[x_1, x_2]$ to $\R_{n-2}[x_1, x_2]$. To write down the corresponding matrix
we determine the images of all basis vectors under $\frac{1}{2}\Delta$:
\begin{align*}
        \frac{1}{2}\Delta v_{n, i} &= \Bigl(\frac{1}{2}\frac{\partial^2 
        (x_2^ix_1^{n - i})}{\partial x_2^2} + 
        \frac{1}{2}\frac{\partial^2 (x_2^ix_1^{n - i})}{\partial x_1^2}\Bigr) 
        \\
        &= \frac{i(i - 1)}{2}x_2^{(i - 2) \vee 0}x_1^{n - i} + \frac{(n - i)(n - i - 1)}{2}
        x_2^ix_1^{(n - i - 2)\vee 0} 
        \\
        &= \binom{i}{2}v_{n - 2, (i - 2)\vee 0} + \binom{n - i}{2}v_{n - 2,i\wedge(n-2)},
        \quad i=0,1,\ldots,n.
\end{align*}
Thus, the corresponding matrix $M_n$ is of size $(n + 1) \times (n - 1)$ 
and its $i$-th row has the form:
\begin{align*}
        \bigl(\underbrace{0,\:\:\: \dots,\:\:\: 0,}_{i - 1}\:\:\:
        \begin{matrix}
                \binom{n - i + 1}{2},& 0,& \binom{i + 1}{2}, 
        \end{matrix}\:\:\:
        \underbrace{0,\:\:\: \dots,\:\:\: 0}_{n - i - 1}\bigr).
\end{align*}
It is clear that all rows are linearly independent and thus the rank of this matrix is
$n - 1$. We now add to this matrix two more rows which will describe later the boundary conditions on $\partial K_\alpha$:
\begin{align}\label{ML_def}
        &\bigl(1,\:\:\:0,\:\:\:\dots,\:\:\: 0,\:\:\:\dots,\:\:\: 0\bigr),\\
        &\bigl(1,\:\:\: b,\:\:\:\dots,\:\:\: b^i,\:\:\:\dots,\:\:\: b^n\bigr),
\end{align}
where $b = \tan{\alpha}$.
As a result, we obtain a matrix $M_{n,b}$ of size $(n+1)\times(n+1)$.

\begin{example} Let us consider special cases $n=3$, $n=4$ with $\alpha=\frac{\pi}{4}$.
For this value of $\alpha$ we have $b = 1.$ Consequently, we get the matrices
        \begin{align*}
                M_{3, 1} = 
                \begin{pmatrix}
                        3 & 0 & 1 & 0\\
                        0 & 1 & 0 & 3\\
                        1 & 0 & 0 & 0\\
                        1 & 1 & 1 & 1
                \end{pmatrix}
                \quad\text{and}\quad
                M_{4, 1} = 
                \begin{pmatrix}
                        6 & 0 & 1 & 0 & 0\\
                        0 & 3 & 0 & 3 & 0\\
                        0 & 0 & 1 & 0 & 6\\
                        1 & 0 & 0 & 0 & 0\\
                        1 & 1 & 1 & 1 & 1
                \end{pmatrix}.
        \end{align*}
\end{example}
Our next goal is to find conditions which guarantee the existence of the unique solution to the following system of linear equations: 
\begin{align}\label{main_matrix_eq}
        M_{n, b} 
        \begin{pmatrix}
                a_{n, 0}\\
                \vdots\\
                a_{n, n - 2}\\
                a_{n, n - 1}\\
                a_{n, n}\\
        \end{pmatrix}
        =
        \begin{pmatrix}
                c_{n - 2, 0}\\
                \vdots\\
                c_{n - 2, n - 2}\\
                0\\
                0\\
        \end{pmatrix},
\end{align}
where $c_{n - 2, 0}, \dots, c_{n - 2, n - 2}$ and $b$ are arbitrary real numbers.
Our construction of the matrix $M_{n,b}$ allows one to reformulate \eqref{main_matrix_eq} as a boundary problem for the Laplace operator. Indeed, if we set 
$$
f(x_1,x_2):=\sum_{i=0}^n a_{n,i}v_{n,i}\in\R_n[x_1,x_2],\quad
g(x_1,x_2):=\sum_{i=0}^{n-2} c_{n-2,i}v_{n-2,i}\in\R_{n-2}[x_1,x_2]
$$
and $b=\tan\alpha$, then \eqref{main_matrix_eq} is equivalent to
$$
\frac{1}{2}\Delta f = g
\quad\text{with boundary conditions}\quad
f(t,0)=f(t\cos\alpha,t\sin\alpha)=0,\ t\in\R.
$$
\begin{proposition}\label{mx_uq}
Assume that $n\ge 3$ and that $b\ne\tan{\frac{q\pi}{n}}$ for all $q\in\N$.
Then the system \eqref{main_matrix_eq} possesses unique solution for any real numbers  $c_{n - 2, 0}, \dots, c_{n - 2, n - 2}$.
Furthermore, if $b=\tan{\frac{q\pi}{n}}$ with some $q\in\N$ then the kernel of $M_{n,b}$ is $1$-dimensional.
\end{proposition}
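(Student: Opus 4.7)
The idea is to identify the matrix $M_{n,b}$ with a concrete linear map between polynomial spaces and then read off its kernel by a Fourier/polar-coordinate computation.

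First I would observe that $M_{n,b}$ represents the linear map
$$
\Phi : \R_n[x_1,x_2] \longrightarrow \R_{n-2}[x_1,x_2]\oplus\R^2,
\qquad
\Phi(f) = \Bigl(\tfrac{1}{2}\Delta f,\ f(1,0),\ f(1,b)\Bigr),
$$
where the first two rows of the target come, respectively, from the $n-1$ coefficients of $\frac{1}{2}\Delta f$ and the two boundary evaluations that produced the extra rows in \eqref{ML_def}. Both source and target have dimension $n+1$, so injectivity and surjectivity are equivalent, and hence it suffices to compute $\ker\Phi$.

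Next, using homogeneity of degree $n$, the conditions $f(1,0)=0$ and $f(1,b)=0$ are equivalent to $f$ vanishing on the two rays $\{x_2=0\}$ and $\{x_2=bx_1\}$; combined with $\Delta f=0$, elements of $\ker\Phi$ are precisely the homogeneous harmonic polynomials of degree $n$ vanishing on $\partial K_\alpha$ (with $\alpha=\arctan b$). I would then pass to polar coordinates: writing $f(r\cos\beta,r\sin\beta)=r^n q(\beta)$ and using
$$
\Delta = \partial_r^2 + \tfrac{1}{r}\partial_r + \tfrac{1}{r^2}\partial_\beta^2,
$$
harmonicity reduces to $q''(\beta)+n^2 q(\beta)=0$. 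Hence $q(\beta)=A\cos(n\beta)+B\sin(n\beta)$ with $A,B\in\R$, corresponding to the two-dimensional space of harmonic polynomials spanned by $\Re(x_1+ix_2)^n$ and $\Im(x_1+ix_2)^n$.

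Finally I would impose the boundary conditions: $q(0)=A=0$ and $q(\alpha)=B\sin(n\alpha)=0$. A nontrivial solution exists iff $\sin(n\alpha)=0$, iff $\alpha=q\pi/n$ for some integer $q\ge 1$, iff $b=\tan(q\pi/n)$. In the generic case $b\ne\tan(q\pi/n)$ the kernel is trivial, so $M_{n,b}$ is invertible and \eqref{main_matrix_eq} has a unique solution; in the exceptional case the kernel is spanned by the single polynomial $\Im(x_1+ix_2)^n = r^n\sin(n\beta)$, hence one-dimensional.

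The main technical point, rather than any obstacle, is the polar-coordinate computation of $\Delta$ on $r^n q(\beta)$; everything else is dimension counting. Minor care is needed for the degenerate value $b=0$ (corresponding to $q=n$), where the two boundary rows of $M_{n,b}$ coincide but the kernel analysis still gives exactly one-dimensional $\ker\Phi$, consistent with the statement.
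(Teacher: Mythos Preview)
Your argument is correct and complete. The identification of $M_{n,b}$ with the map $\Phi(f)=(\tfrac12\Delta f,\,f(1,0),\,f(1,b))$ matches exactly the paper's reformulation of \eqref{main_matrix_eq} as the boundary problem $\tfrac12\Delta f=g$, $f|_{\partial K_\alpha}=0$; the dimension count reduces the question to injectivity, and the polar--coordinate computation $q''+n^2q=0$ pins down the two--dimensional space of degree-$n$ harmonic polynomials and hence the kernel. The edge case $b=0$ is handled correctly.

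Your route, however, is genuinely different from the paper's. The paper proceeds by an explicit Gaussian elimination: it first observes that the even-indexed coefficients $a_{n,2k}$ are determined by a simple two-term recursion (starting from $a_{n,0}=0$), and then attacks the remaining square system $M^{odd}_{n,b}$ on the odd-indexed coefficients by a sequence of elementary row operations (Lemma~\ref{mx_simpl_lemma}) that brings it to upper-triangular form. The key computation is that the final pivot equals $\binom{n}{2n_{odd}+1}^{-1}u_{\pi/n}(1,b)$, so invertibility fails exactly when $\Im(1+ib)^n=0$, i.e.\ when $b=\tan(q\pi/n)$. Your approach bypasses all of this linear algebra by recognising the problem as a classical Dirichlet problem for homogeneous harmonic polynomials; this is shorter and more conceptual. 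What the paper's approach buys is an explicit LU-type factorisation of $M_{n,b}$ with closed formulas for the multipliers $\lambda_{n,k}$, which feeds directly into the recursive algorithm for computing the harmonic polynomial $h$ in Theorem~\ref{thm:harmonic}(a) and into the pseudocode in the appendix. So the two proofs serve different purposes: yours establishes the proposition cleanly, while the paper's extracts the constructive content needed later.
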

\begin{proof}
        It is immediate from the structure of the first $(n-1)$ rows in the matrix
        $M_{n,b}$ that one can separate variables $a_{n,i}$ with even and odd indices $i$.
        
        We first show that the variables  $a_{n ,2k}$, $k = 0,\dots,\bigl[\frac{n}{2}\bigr]$ can be uniquely determined by using a recurrent formula. Indeed, 
        the pre-last equation implies that $a_{n, 0} = 0$. Furthermore, for every 
        $k = 1,\dots,\bigl[\frac{n}{2}\bigr]$ we have the following recurrent relation:
        \begin{align*}
                \binom{n - 2k + 2}{2}a_{n, 2k - 2} + \binom{2k}{2}a_{n, 2k} = c_{n, 2k - 2},
        \end{align*}
        which is equivalent to
        \begin{align*}
                a_{n, 2k} = \frac{c_{n, 2k - 2} - \binom{n - 2k + 2}{2}a_{n, 2k - 2}}
                {\binom{2k}{2}}.
        \end{align*}
        This recursion allows us to uniquely determine all numbers $a_{n,i}$ with even indices $i$.

        We now turn to variables with odd indices. Set 
        $$
        n_{odd} = \biggl\lceil \frac{n - 3}{2}\biggr\rceil
        \quad\text{and}\quad
        r_{n - 2} := -\sum\limits_{k = 0}^{[\frac{n}{2}]}b^{2k}a_{n, 2k}.
        $$ 
        Then, the system of equations on $\{a_{2k + 1}\}$ can be written in the matrix form
        \begin{align*}
                M^{odd}_{n, b}
                \begin{pmatrix}
                        a_{n, 1}\\
                        a_{n, 3}\\
                        \vdots\\
                        a_{n, 2k - 1}\\
                        \vdots\\
                        a_{n, 2n_{odd} - 1}\\
                        a_{n, 2n_{odd} + 1}\\
                \end{pmatrix}
                =
                \begin{pmatrix}
                        c_{n - 2, 1}\\
                        c_{n - 2, 3}\\
                        \vdots\\
                        c_{n - 2, 2k - 1}\\
                        \vdots\\
                        c_{n - 2, 2n_{odd} - 1}\\
                        r_{n - 2}\\
                \end{pmatrix}&,
        \end{align*}
        where the $i$-th row with $i\le n_{odd}$ equals
        \begin{align*}
                \bigl(\underbrace{0,\:\:\: \dots,\:\:\: 0,}_{i - 1}\:\:\:
                \begin{matrix}
                        \binom{n - 2i + 1}{2},& \binom{2i + 1}{2},
                \end{matrix}\:\:\:
                \underbrace{0,\:\:\: \dots,\:\:\: 0}_{n_{odd} - i}\bigr)
        \end{align*}
        and the last row is given by
        \begin{align*}
                \bigl(b,\:\:\:\dots,\:\:\: b^{2i + 1},\:\:\:\dots,\:\:\: b^{2n_{odd} + 1}\bigr).
        \end{align*}
We now simplify this system by applying so-called elementary row-addition matrices.
Let us recall the corresponding definition.
\begin{definition}
                Let $E_n$ denote $n$-dimensional identity matrix and let also 
                $\{e_{n, i}\}$ be the standard basis in $\R^n$. Matrix of the form
                \begin{align*}
                        E_{n}(i, j, \lambda) := E_n + \lambda e_{n, i}e_{n, j}^T
                \end{align*}
                is called elementary row-addition matrix of dimension $n$ with parameters $i, j$ and $\lambda$. 
                Multiplying by this matrix on the left leads to a matrix where the 
                $j$-th row has been multiplied  by $\lambda$ and added to $i$-th row.
        \end{definition}
We shall use such row-addition matrices to transform the matrix $M_{n,b}$ to 
an upper-triangular one.
\begin{lemma}\label{mx_simpl_lemma}
                 Set \begin{align*}
                        \lambda_{n, 1} = (-1)\cdot\frac{b}{\binom{n - 1}{2}}
                        \ \text{ and }\ 
                                        &\lambda_{n, k + 1} =  (-1)\cdot\frac{b^{2k + 1} + \lambda_{n, k}
                                        \binom{2k + 1}{2}}{\binom{n - 2k - 1}{2}},\:\: 2\le k  \le n_{odd}-1.
                                \end{align*}
                                Then we have
                                \begin{align}\label{mx_simpl_1}
                                \nonumber
                                        UM_{n, b}^{odd} &:= \revprod\limits_{k = 1}^{n_{odd}}
                                        E_{n_{odd} + 1}(n_{odd} + 1, k, \lambda_{n, k})\cdot M_{L, n, b}^{odd}
                                        \\
                                        &=
                                        \begin{pmatrix}
                                                \binom{n - 1}{2} & \binom{3}{2} & 0 && \cdots & 0\\ 
                                                &\ddots & \ddots &&&\\
                                                \vdots&&\binom{n - 2k - 1}{2} &  \binom{2k + 1}{2} &&\vdots\\ 
                                                &&&\ddots & \ddots &\\
                                                &&&&\binom{n + 1 - 2n_{odd}}{2} & \binom{2n_{odd} + 1}{2}\\
                                                0 && \cdots && 0 & C(n)\cdot u_{\pi/n}(1, b)\\
                                        \end{pmatrix},
                                \end{align}
                                where $C(n)$ depends only on $n$ and $\revprod$ denotes the left product of matrices, that is,
                                $$
                                \revprod_{k = 1}^l A(k) := A(l)\revprod_{k = 1}^{l - 1}A(k),
                                $$
                                for an arbitrary family of matrices $\{A(k)\}.$
        \end{lemma}
        \begin{proof}
         We have to show that the multiplication by matrices $E_{n_{odd+1}}(n_{odd}+1,k,\lambda_{n,k})$ eliminates all the  off-diagonal elements of the last row.  By the definition, multiplication from the left by $E_{n_{odd+1}}(n_{odd}+1,k,\lambda_{n,k})$ is equivalent to adding $k$th row multiplied by $\lambda_{n,k}$ to the last row. It is immediate from the definition of $M_{n,b}^{odd}$ that the $k$th operation changes $k$th and $(k+1)$th elements of the last row. Let $\theta_k$ denote the $(k+1)$th element of the last row after $k$ operations. Clearly,
         $$
         \theta_k=b^{2k+1}+\lambda_{n,k} {2k+1\choose 2}
         $$
         and, consequently, the value of the $k$th element after $k$ operations is equal to $\theta_{k-1}+\lambda_{n,k}{n-2k+1\choose 2}$. Recalling the definition of $\lambda_{n,k}$, we see that this value is zero. Thus, it remains to compute $\theta_{n_{odd}}$, which is the last element of the last row after completing all operations. To this end, we prove by induction that 
                        \begin{align*}
                                \lambda_{n, k} = \frac{\sum\limits_{l = 0}^{k - 1}(-1)^{l + 1}
                                b^{2(k - 1 - l) + 1}\prod\limits_{i = 0}^{k - l - 2}\binom{n - 1 - 2i}{2}
                                \prod\limits_{j = 0}^{l - 1}\binom{2k - 1 - 2j}{2}}
                                {\prod\limits_{s = 0}^{k - 1}\binom{n - 1 - 2s}{2}}. 
                        \end{align*}
                        It is clear that this representation is true for 
                        $k=1$. So, the basis case of the induction is checked.
                        Assume now that the claim holds for some $k\ge 1$. Then, by the definition of $\lambda_{n,k+1}$,
                        \begin{align*} 
                                \lambda_{n, k + 1} &=  -\frac{b^{2k + 1} + \lambda_{n, k} 
                                \binom{2k + 1}{2}}{\binom{n - 2k - 1}{2}}
                                \\
                                &=-\Bigg[\frac{b^{2k + 1}}{\binom{n - 2k - 1}{2}} +
                                \frac{\sum\limits_{l = 0}^{k - 1}(-1)^{l + 1}
                                b^{2(k - 1 - l) + 1}\prod\limits_{i = 0}^{k - l - 2}\binom{n - 1 - 2i}{2}
                                \prod\limits_{j = 0}^{l - 1}\binom{2k - 1 - 2j}{2}}
                                {\prod\limits_{i = 0}^{k - 1}\binom{n - 1 - 2i}{2}}\cdot
                                \frac{\binom{2k + 1}{2}}{\binom{n - 2k - 1}{2}}\Bigg]
                                \\
                                &=\Biggl[-b^{2k + 1}\cdot\prod\limits_{s = 0}^{k - 1}\binom{n - 1 - 2s}{2} + 
                                \sum\limits_{l = 0}^{k - 1}(-1)^{l + 2}
                                b^{2(k - 1 - l) + 1}\prod\limits_{i = 0}^{k - l - 2}\binom{n - 1 - 2i}{2}
                                \\
                                &\hspace{2cm}\times\prod\limits_{j = 0}^{l - 1}\binom{2k - 1 - 2j}{2}
                                \binom{2k + 1}{2}\Biggr]
                                \frac{1}{\prod\limits_{s = 0}^{k - 1}\binom{n - 1 - 2s}{2}\cdot
                                \binom{n - 2k - 1}{2}}.
                        \end{align*}
                    Shifting the indices $l$ and $j$, one obtains 
                    \begin{align*}
                        &\sum\limits_{l = 0}^{k - 1}(-1)^{l + 2}
                                b^{2(k - 1 - l) + 1}\prod\limits_{i = 0}^{k - l - 2}\binom{n - 1 - 2i}{2}
                                \prod\limits_{j = 0}^{l - 1}\binom{2k - 1 - 2j}{2} \binom{2k + 1}{2}\\
                        &=\sum\limits_{l = 1}^{k}(-1)^{l + 1}
                                b^{2((k+1) - 1 - l) + 1}\prod\limits_{i = 0}^{(k+1) - l - 2}\binom{n - 1 - 2i}{2}
                                \prod\limits_{j = 0}^{l - 1}\binom{2(+1)k - 1 - 2j}{2}.      
                    \end{align*}
                    Having this equality and noticing that 
                    $$
                    -b^{2k+1}\prod_{i=0}^{k-1}{n-1-2i\choose 2}
                    =(-1)^1b^{2(k+1)-1}\prod_{i=0}^{(k+1)-2}{n-1-2i\choose 2},
                    $$
                    we complete the proof of the induction step.
                    Consequently,
                        \begin{align*}
                                &\theta_{n_{odd}} = b^{2n_{odd} + 1} + \lambda_{n, n_{odd}}\binom{2n_{odd} + 1}{2} 
                                \\
                                &= b^{2n_{odd} + 1} \\
                                &\hspace{1cm}+\frac{\sum\limits_{l = 0}^{n_{odd} - 1}(-1)^{l + 1}
                                b^{2(n_{odd} - 1 - l) + 1}\prod\limits_{i = 0}^{n_{odd} - l - 2}
                                \binom{n - 1 - 2i}{2}\prod\limits_{j = 0}^{l - 1}\binom{2n_{odd} - 1 - 2j}{2}}
                                {\prod\limits_{s = 0}^{n_{odd} - 1}\binom{n - 1 - 2s}{2}}
                                \binom{2n_{odd} + 1}{2}.
                        \end{align*}
                        Shifting the indices $l$ and $j$ once again, we obtain
                        \begin{align*}
                                \theta_{n_{odd}}= 
                                \frac{\sum\limits_{l = 0}^{n_{odd}}(-1)^{l}
                                b^{2(n_{odd} - l) + 1}\prod\limits_{i = 0}^{n_{odd} - l - 1}
                                \binom{n - 1 - 2i}{2}\prod\limits_{j = 0}^{l - 1}\binom{2n_{odd} + 1 - 2j}{2}}
                                {\prod\limits_{s = 0}^{n_{odd} - 1}\binom{n - 1 - 2s}{2}}.
                        \end{align*}
                        Noticing that
                        \begin{align*}
                                \prod\limits_{t = 0}^N\binom{M - 2t}{2} = \frac{M!}{(M - 2N - 2)!}
                                2^{-N - 1},
                        \end{align*}
                        for all $M, N\ge0$, we can further simplify the expression above:
                        \begin{align*}
                                \theta_{n_{odd}} 
                                =\frac{1}{{n\choose 2n_{odd}+1}}
                                \sum\limits_{l = 0}^{n_{odd}}(-1)^{l}b^{2n_{odd} + 1 - 2l}\binom{n}{2n_{odd} + 1 - 2l}.
                        \end{align*}
                        Recalling that $u_{\pi/n}(x_1,x_2)=\Im(x_1+ix_2)^n$, we infer that 
                        $$
                        \theta_{n_{odd}} 
                                =\frac{1}{{n\choose 2n_{odd}+1}}u_{\pi/n}(1,b).
                        $$
                This finishes the proof of the lemma.
        \end{proof}        
    As we have already seen, all $a_{n,j}$ with even indices $j$ are uniquely determined by the numbers $c_{n,j}$. Now, Lemma~\ref{mx_simpl_lemma} implies that the matrix $M_{n,b}^{odd}$ is invertible for all $b$ such that $u_{\pi/n}(1,b)$ is not zero. But it is immediate from the definition that all zeros of $u_{\pi/n}(1,b)$ are of the form $b=\tan{(q\pi/n)}$, $q\in\N$.  Thus, the first claim is proved.  The fact that the kernel $M_{n,b}$ is one-dimensional for $b=\tan{(q\pi/n)}$ also follows from Lemma~\ref{mx_simpl_lemma}. Thus, the proof of the proposition is complete.
\end{proof}
The following claim is immediate from the fact that the kernel of $M_{n,\tan(\pi/n)}$ is one-dimensional.
\begin{corollary}\label{nabla_ker}
        Let $f$ be a homogeneous polynomial of degree $n$.
        If $\Delta f = 0$ and $f\vert_{\partial K_{\pi/n}}=0$
        then $f = c u_{\pi/n}$ for some constant $c\in\R.$
\end{corollary}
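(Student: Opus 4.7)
The plan is to translate the hypothesis directly into the linear-algebraic framework set up in the previous subsection and then invoke Proposition~\ref{mx_uq}.

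First, I would write the homogeneous polynomial in the standard basis as $f(x_1,x_2) = \sum_{i=0}^n a_{n,i} v_{n,i}$ with $v_{n,i}=x_2^i x_1^{n-i}$, and encode the hypotheses as linear conditions on the coefficient vector $\mathbf{a}=(a_{n,0},\dots,a_{n,n})^T$. The condition $\Delta f\equiv 0$ amounts to the vanishing of the image of $\mathbf{a}$ under the first $(n-1)$ rows of $M_{n,b}$ (the rows encoding $\frac{1}{2}\Delta$). The boundary condition $f\vert_{\partial K_{\pi/n}}=0$ has two pieces: on the ray $x_2=0$ one has $f(t,0)=a_{n,0}t^n$, so the vanishing forces $a_{n,0}=0$, which is exactly the third-to-last row of $M_{n,b}$ applied to $\mathbf{a}$; on the ray $x_2=x_1\tan(\pi/n)$, homogeneity gives $f(t\cos(\pi/n),t\sin(\pi/n)) = t^n\cos^n(\pi/n)\sum_{i=0}^n a_{n,i}b^i$ with $b=\tan(\pi/n)$, so vanishing is equivalent to the last row of $M_{n,b}$ annihilating $\mathbf{a}$.

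Combining these, the hypothesis is equivalent to $M_{n,\tan(\pi/n)}\mathbf{a}=0$, i.e., $\mathbf{a}\in\ker M_{n,\tan(\pi/n)}$. Taking $q=1$ in Proposition~\ref{mx_uq}, the kernel of this matrix is one-dimensional. On the other hand, $u_{\pi/n}$ is itself homogeneous of degree $n$, harmonic in the classical sense, and vanishes on $\partial K_{\pi/n}$ by Lemma~\ref{lem:BM1}; consequently its coefficient vector is a nonzero element of the same kernel. Since the kernel is a line, there must exist $c\in\R$ with $\mathbf{a}=c\cdot(\text{coefficient vector of }u_{\pi/n})$, hence $f=c\,u_{\pi/n}$, which is the desired conclusion.

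There is no real obstacle here: the only thing to check carefully is the correct identification of the boundary constraints with the last two rows of $M_{n,b}$ and the fact that $u_{\pi/n}$ actually satisfies all three conditions simultaneously (harmonicity and both boundary vanishings), so that its coefficient vector spans the one-dimensional kernel produced by the proposition. Everything else is an immediate consequence of Proposition~\ref{mx_uq}.
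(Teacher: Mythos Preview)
Your proposal is correct and takes essentially the same approach as the paper, which simply states that the claim is immediate from the one-dimensionality of the kernel of $M_{n,\tan(\pi/n)}$ established in Proposition~\ref{mx_uq}; you have just spelled out the identification of the hypotheses with the rows of $M_{n,b}$ and the fact that $u_{\pi/n}$ spans the kernel. One tiny slip: the row $(1,0,\dots,0)$ encoding $a_{n,0}=0$ is the \emph{second}-to-last row of $M_{n,b}$, not the third-to-last.
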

\subsection{Proof of Theorem~\ref{thm:harmonic}.} 
Let $h$ be a polynomial of degree $m$. Using bases $\{v_{k,j}\}$ we can represent $h$
as follows
$$
h(x_1,x_2)=\sum_{k=0}^m h_k(x_1,x_2),
$$
where 
$$
h_k(x_1,x_2):=\sum_{j=0}^k a_{k,j}v_{k,j}
\quad k=0,1,\ldots,m.
$$
Let us notice that the condition 
$$
h(t,0)=h(t\cos(\pi/m),t\sin(\pi/m))=0
\quad\text{for all }t
$$
is equivalent to
$$
h_k(t,0)=h_k(t\cos(\pi/m),t\sin(\pi/m))=0
\quad\text{for all $t$ and all $k$}.
$$

To construct the desired polynomial and to show the uniqueness we shall use an 
"inverse" induction and prove the following statement.
\emph{For every $l \le m$ there exist unique, up to a constant multiplier, numbers
$\{a_{i, j}\}$ with $l\le i\le m$ and 
        $0\le j \le i$ such that
        \begin{itemize}
        \item[(a)] $\deg L_X\left(\sum\limits_{k = l}^{m}h_k\right) \le l - 3$,
        \item[(b)] $h_k(t, 0) = h_k\bigl(t\cos(\pi/m), t\sin(\pi/m)\bigr) = 0$
                   for all $t$ and all $l\le k\le m$.
        \end{itemize}
    }
Let us first check the basis case $l=m$. According to Lemma~\ref{lem:BM1},
the polynomial $Q_m(x)=u_{\pi/m}(x)$ possesses all required properties. Let $h_m(x)$
be any $m$-homogeneous polynomial which satisfies (a) and (b). Combining (a) with Lemma~\ref{taylor_exp}, we infer that $\Delta h_m(x)=0$.  Since $h_m$ satisfies the boundary conditions in (b), we may apply Corollary \ref{nabla_n} to conclude that $h_m=cQ_m$.

Assume now that the statement holds for some $l+1\le m$. Then there exist numbers
$\{c_{i,j}\}$ such that 
$$
L_X\left(\sum_{k=l+1}^m h_k\right)
=\sum_{i=0}^{l-2}\sum_{j=0}^i c_{i,j}v_{i,j}.
$$
According to Proposition \ref{mx_uq}, the system of linear equations
\begin{align*}
                M_{l, b}  
                \begin{pmatrix}
                        a_{l, 0}\\
                        \vdots\\
                        a_{l, l - 2}\\
                        a_{l, l - 1}\\
                        a_{l, l}\\
                \end{pmatrix}
                =
                \begin{pmatrix}
                        -c_{l - 2, 0}\\
                        \vdots\\
                        -c_{l - 2, l - 2}\\
                        0\\
                        0\\
                \end{pmatrix}
        \end{align*}
has unique solution. In other words, there exists unique $l$-homogeneous polynomial
$h_l$ such that 
$$
\frac{1}{2}\Delta h_l=-\sum_{i=0}^{l-2}c_{l-2,i}v_{l-2,i}
$$
and 
$$
h_l(t, 0) = h_l\bigl(t\cos(\pi/m), t\sin(\pi/m)\bigr) = 0
\quad\text{for all }t.
$$
Combining this with Lemma~\ref{taylor_exp}, we infer that 
the degree of $L_X\left(\sum_{k=l}^m h_k\right)$ does not exceed $l-3$.
Thus, the step of the induction is proved.

The claim we have shown above implies the existence and the uniqueness (up to a constant multiplier) of a polynomial $h(x)$ of degree $m$ such that 
$$
L_Xh(x)=0\text{ for all }x\in\R^2
\text{ and }h\vert_{\partial K_{\pi/m}}=0.
$$
Furthermore, the $m$-homogeneous part of $h$ is proportional to $u_{\pi/m}$. 

If the condition \eqref{eq:no-overshoot} holds then, for every $x\in R\cap K_{\pi/m}$, one has
$$
\{x + X\notin K_{\pi/m}\}
=\{x + X\in\partial K_{\pi/m}\}.
$$
Consequently, for every $x\in R\cap K_{\pi/m}$,
        \begin{align*}
            \E\bigl[h(x + X), x + X\in K_{\pi/m}\bigr] 
                &= \E\bigl[h(x + X)\bigr] - 
                \E\bigl[h(x + X),x + X\notin K_{\pi/m}\bigr] \\
                &= \E\bigl[h(x + X)\bigr] - 
                \E\bigl[h(x + X), x + X\in\partial K_{\pi/m}\bigr]\\
                &=\E\bigl[h(x + X)\bigr],
        \end{align*}
        in the last step we have used the equality $h\vert_{\partial K_{\pi/m}}=0$.
        Noting that $L_Xh=0$ is equivalent to $\E\bigl[h(x + X)\bigr]=h(x)$, we conclude that $h$ is harmonic for $S(n)$ killed at leaving 
        $K_{\pi/m}$.  The fact that all coefficients can be expressed in terms of moments follows from Lemma~\ref{taylor_exp}. Thus, the first part of the theorem is proved.

Our approach allows us to prove the following result.
\begin{corollary}\label{low_deg_harmonic}
                Let $f$ be  a polynomial satisfying 
                \begin{align}\label{eq:f-cond}
                        L_Xf = 0\quad\text{and}\quad f\vert_{\partial K_{\pi/m}}=0.
                \end{align}
                If the degree of $f$ is smaller than $m$ then $f =0.$
        \end{corollary}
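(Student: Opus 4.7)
The plan is to mirror the inductive construction used in the proof of Theorem~\ref{thm:harmonic}, but to run the induction \emph{downward} from the top homogeneous component, exploiting that $\pi/m$ is not resonant with any smaller degree. Set $n := \deg f < m$ and decompose $f = f_n + f_{n-1} + \cdots + f_0$ into homogeneous pieces. The vanishing $f\vert_{\partial K_{\pi/m}} = 0$, parameterised along each boundary ray, becomes a polynomial identity in the parameter $t$; comparing coefficients of $t^k$ on each ray yields $f_k\vert_{\partial K_{\pi/m}} = 0$ for every $k$.

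I then claim $f_k \equiv 0$ for all $k$, proved by inverse induction on $k$ from $k = n$ down to $k = 0$. At stage $k$, the induction hypothesis $f_{k+1} = \cdots = f_n = 0$ reduces $f$ to $f_k + \cdots + f_0$, which still satisfies $L_X f = 0$, so Corollary~\ref{nabla_n} gives $\Delta f_k \equiv 0$. Expressing $f_k$ in the basis $\{v_{k,i}\}$, the conditions $\Delta f_k = 0$ together with $f_k\vert_{\partial K_{\pi/m}} = 0$ translate exactly into the homogeneous linear system $M_{k,\tan(\pi/m)}(a_{k,0},\ldots,a_{k,k})^T = 0$. It therefore suffices to show that $M_{k,\tan(\pi/m)}$ has trivial kernel for every $1 \leq k < m$.

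For $k \geq 3$, Proposition~\ref{mx_uq} (and the explicit formula in Lemma~\ref{mx_simpl_lemma}) reduces this to verifying $u_{\pi/k}(1, \tan(\pi/m)) \neq 0$. The identity
$$
(1 + i\tan(\pi/m))^k = \frac{e^{ik\pi/m}}{\cos^k(\pi/m)}
$$
yields $u_{\pi/k}(1, \tan(\pi/m)) = \sin(k\pi/m)/\cos^k(\pi/m)$, which is nonzero precisely because $1 \leq k < m$ prevents $k\pi/m$ from being an integer multiple of $\pi$. The small-degree cases $k \in \{0, 1, 2\}$ (and the mild adjustment when $m = 2$, so that $\tan(\pi/m)$ is formally infinite and the second boundary row of $M_{k,\cdot}$ must be reinterpreted as $a_{k,k} = 0$) are immediate by direct inspection of the boundary conditions, in combination with $\Delta f_2 = 0$ in the quadratic case.

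The main obstacle I anticipate is purely bookkeeping: ensuring that the top-homogeneous-component reduction step remains valid when several consecutive top components vanish, and confirming that the non-resonance $u_{\pi/k}(1, \tan(\pi/m)) \neq 0$ holds uniformly over the full range $1 \leq k < m$. Once these are checked, the inverse induction closes and we conclude $f \equiv 0$.
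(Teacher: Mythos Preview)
Your proof is correct and follows essentially the same approach as the paper: decompose $f$ into homogeneous components, use Corollary~\ref{nabla_n} to get $\Delta f_r=0$ for the top component, rewrite as the homogeneous linear system $M_{r,\tan(\pi/m)}a=0$, invoke Proposition~\ref{mx_uq} to conclude $f_r=0$, and iterate downward. Your treatment is in fact slightly more careful than the paper's in two respects: you verify the non-resonance condition via the explicit formula $u_{\pi/k}(1,\tan(\pi/m))=\sin(k\pi/m)/\cos^k(\pi/m)$ rather than the bare assertion $\tan(\pi/m)\neq\tan(q\pi/k)$, and you separately address the low-degree cases $k\le 2$ (where Proposition~\ref{mx_uq}, stated for $n\ge 3$, does not directly apply) as well as the degenerate boundary encoding when $m=2$.
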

        \begin{proof}
            Let $r<m$ denote the degree of the polynomial $f$. As usual we decompose
            $f$ into homogeneous parts: $f=\sum_{j=0}^r f_r$. 
            According to Corollary \ref{nabla_n}, $\Delta f_r = 0$. 
            Moreover, $f_r\vert_{\partial K_{\pi/m}} = 0$.
            We can rewrite these equalities in terms of linear equations: 
                \begin{align*}
                        M_{r, b}  
                        \begin{pmatrix}
                                a_{r, 0}\\
                                \vdots\\
                                a_{r, r - 2}\\
                                a_{r, r - 1}\\
                                a_{r, r}\\
                        \end{pmatrix}
                        =
                        \begin{pmatrix}
                                0\\
                                \vdots\\
                                0\\
                                0\\
                                0\\
                        \end{pmatrix},
                \end{align*}
                where $a_{r,j}$ are the coefficients of $f_r$ in the basis $v_{r,j}$. 
                The assumption $r<m$ implies that $b=\tan(\pi/m)\neq \tan(q\pi/r)$ for all $q\in\N$. Therefore, applying Proposition \ref{mx_uq}, we infer that $a_{r,j}=0$ for all $j$ is the only solution to the system above. In other words, $f_r=0$. This implies that $f-f_r$ satisfies all the conditions in \eqref{eq:f-cond}. Repeating the same arguments, we conclude that all components $f_j$ are equal to zero.
        \end{proof}

Let us now turn to the proof of Theorem~\ref{thm:harmonic}(b).    

Let $h$ be a positive harmonic polynomial of degree $m$ for $S(n)$ killed at leaving $K_\alpha$, that is,
$$
\E[h(x+X);x+X\in K_\alpha]=h(x)
\quad\text{and}\quad 
h(x)\ge 0
\quad\text{for all }x\in R\cap K_\alpha.
$$
The boundary condition $h\vert_{R\cap \partial K_\alpha}=0$ and the assumption \eqref{eq:no-overshoot} imply that 
$$
L_X h(x)=0
\quad\text{for all }x\in R\cap K_\alpha.
$$
Then, applying Corollary~\ref{nabla_n}, we conclude that 
$$
\Delta h_m(x)=0,\ x\in\R^2
\quad\text{and}\quad 
h_m\vert_{\partial K_\alpha}=0
$$
where $h_m$ denotes the homogeneous part of $h$ of degree $m$. 
Writing this boundary problem in terms of linear equations, we have 
\begin{align}\label{eq:harm-equ}
                M_{m, b} 
                \begin{pmatrix}
                        a_{m, 0}\\
                        \vdots\\
                        a_{m, m - 2}\\
                        a_{m, m - 1}\\
                        a_{m, m}\\
                \end{pmatrix}
                =
                \begin{pmatrix}
                        0\\
                        \vdots\\
                        0\\
                        0\\
                        0\\
                \end{pmatrix},
        \end{align}
        where $b = \tan{\alpha}$ and $\{a_{m,i}\}$ are coefficients of $h_m$ in the standard basis $\{v_{m,ik}\}$, that is, 
        $h_m(x)=\sum_{i=0}^ma_{m,i}v_{m,i}$.

        Proposition~\ref{mx_uq} implies that if the system \eqref{eq:harm-equ} possesses a non-trivial solution then $\alpha=q\pi/m$ for some $q\in\{1,2,\ldots,m-1\}$. Thus, it remains to show that $q=1$. Assume that this is not true, that is, $q\ge2$.  As we have shown in Proposition~\ref{mx_uq}, the kernel of the matrix $M_{m,\tan(q\pi/m)}$ is one-dimensional. Noticing that $u_{\pi/m}$ is a solution to \eqref{eq:harm-equ}, we infer that all 
        non-trivial solutions have the form $cu_{\pi/m}$ and, consequently, $h_m(x)=c_0u_{\pi/m}$ with some positive $c_0$. If $q\ge2$ then the function
        $u_{\pi/m}(x_1,x_2)$ is negative for all $x_1,x_2$ such that ${\rm arg}(x_1+ix_2)\in(\pi/m,2\pi/m)$. But this contradicts the assumption that $h_m$
        is positive. Thus, $q=1$ and the proof is complete.
\section{Proof of Theorem~\ref{thm:uniqueness}}
We first prepare the proof of the theorem by deriving some properties of random walks killed at leaving $K_\alpha$.
\begin{lemma}\label{boundary_harmonic}
        Assume that \eqref{eq:1_and_2-moments} and \eqref{eq:no-overshoot} hold. Then,
        for every $x\in K_\alpha\cap R$, one of the following properties is true:
        \begin{itemize}
                \item $\P\bigl(x + X\in\partial K_\alpha\vert X\neq0\bigr) = 1,$
                \item for every $M>0$ there exists $n_0=n_0(M)$ such that 
                        \begin{align*}
                        \P\bigl(\max\{x_1 + S_1(n_0), x_2 + S_2(n_0)\} > M , \tau_x > n_0\bigr) > 0.
                        \end{align*}
        \end{itemize}
\end{lemma}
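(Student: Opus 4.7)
The plan is proof by contradiction: suppose both alternatives fail for some $x\in K_\alpha\cap R$. The negation of the first alternative gives $\P(X\neq 0,\:x+X\notin\partial K_\alpha)>0$, so some $v\in\mathrm{supp}\,X$ with $v\ne 0$ satisfies $x+v\notin\partial K_\alpha$. The negation of the second alternative gives $M_0>0$ such that, on the survival event $\{\tau_x>n\}$, both coordinates of $x+S(n)$ are bounded by $M_0$ for every $n$. Thus the killed trajectory is almost surely confined to the finite lattice set $F:=K_\alpha\cap R\cap[0,M_0]^2$, and in particular $\tau_x<\infty$ almost surely.

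I would split into two sub-cases depending on whether $x+v\in K_\alpha$ (the walk enters the open cone with positive probability at time one) or $x+v\notin\overline{K_\alpha}$ (the walk exits $\overline{K_\alpha}$ with positive probability at time one). In the first sub-case, once the walk reaches some interior lattice point $y\in K_\alpha\cap R$ at time one, I would use the zero-mean and identity-covariance assumption \eqref{eq:1_and_2-moments}, which forces $\mathrm{supp}\,X$ to affinely span $\R^2$, to construct inductively a finite sequence of support vectors whose partial sums starting at $y$ remain in $K_\alpha$ and terminate at a point with maximum coordinate exceeding $M_0$. Each transition along such a sequence has positive probability, so the composite path has positive probability, contradicting the confinement of the walk to $F$.

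The second sub-case would be reduced to the first by exploiting \eqref{eq:no-overshoot}. I would show that, if none of the non-zero non-boundary jumps of $X$ from $x$ land in $K_\alpha$, then the walk nevertheless visits $\partial K_\alpha\cap R$ with positive probability (otherwise from $x$ the walk either stays put or is killed in one step, which, combined with $\E X=0$ and $\E X_i^2=1$, is incompatible with the span of $\mathrm{supp}\,X$). From a boundary point $y$ reached in this way, \eqref{eq:no-overshoot} applied at $y$ together with the strong Markov property and the non-degeneracy of $X$ guarantees a positive-probability jump into the open cone, after which the first sub-case applies from the corresponding interior visit time.

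The main obstacle is the inductive lattice-path construction in the first sub-case. The zero-mean property plays the essential role: while iterating a single support vector $v$ may eventually force the walk out of $K_\alpha$, by $\E X=0$ there must be a compensating support vector $w$ with an opposing transverse component, and an appropriate alternation of $v$ and $w$ yields a cone-preserving path whose transverse fluctuations stay controlled while the net displacement in the desired coordinate grows past $M_0$. Verifying that the alternation can always be carried out within the open cone---i.e.\ that no intermediate partial sum hits $\partial K_\alpha$---is the delicate step, and requires choosing $v$ and $w$ carefully based on the position of $y$ relative to the two boundary rays.
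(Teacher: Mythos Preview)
Your overall plan is sound, but the step you yourself call ``delicate''---constructing a positive-probability path from $y\in K_\alpha\cap R$ that stays in $K_\alpha$ while pushing a coordinate past $M_0$---is the whole difficulty, and your sketch (``alternate $v$ and $w$ with opposing transverse components'') does not establish it. In a narrow cone a single step can hit $\partial K_\alpha$, and nothing in your outline explains why the proposed alternation avoids this for \emph{every} interior starting point. The paper closes the gap by a structural move you are missing: the no-overshoot hypothesis forces the generators of the lattice $R$ to lie on $\partial K_\alpha$, so a linear map $\phi$ sends $R\to\Z^2$ and $K_\alpha\to\R_+^2$, and in these coordinates no-overshoot becomes simply $X^\phi_1\ge-1$, $X^\phi_2\ge-1$ almost surely. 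One then argues directly: if some jump $(a,b)$ with $a,b\ge0$ and $a+b>0$ has positive probability, iterate it; otherwise every nonzero jump of $X^\phi$ is of the form $(a,-1)$ or $(-1,b)$, and $\E X^\phi=0$ forces some $(a,-1)$ with $a\ge2$ and some $(-1,b)$ with $b\ge1$ to occur with positive probability. Alternating these two jumps from any start with second coordinate $\ge2$ keeps both coordinates $\ge1$ at every step while the first coordinate grows by at least $1$ every two steps. Starts with second coordinate $1$ and first coordinate $\ge2$ are reduced to this case by a single preliminary $(-1,b)$ step, and the unique remaining point $\phi^{-1}(1,1)$ is exactly where the first alternative of the lemma holds.

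Two smaller points. Your ``second sub-case'' $x+v\notin\overline{K_\alpha}$ is vacuous: condition \eqref{eq:no-overshoot}, as used throughout the paper, forbids the walk from jumping over $\partial K_\alpha$ from an interior lattice point, so any nonzero support vector $v$ with $x+v\notin\partial K_\alpha$ must satisfy $x+v\in K_\alpha$. And there is no need to pass to a new interior point $y$ at time one---$x$ itself is already interior, and the escape construction should start from $x$.
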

\begin{proof}
        Let $v_1$ and $v_2$ be the generators of $R$, that is, 
        $$
        R = \{\lambda_1 v_1 + \lambda v_2,\:\:\lambda_1,\lambda_2\in\Z\}.
        $$ 
        The assumption \eqref{eq:no-overshoot} implies that we can take  $v_1,v_2\in\partial K_\alpha$. 
        Thus there exists a linear transformation $\phi$ such that
        \begin{align*}
                \phi(v_1) = (1, 0),\quad\phi(v_2) = (0, 1)
                \quad\text{and}\quad\phi(K_\alpha) = \R^2_+.
        \end{align*}
        After this transformation $\phi(R) = \Z^2$ and $\phi(K_\alpha\cap R) = \Z^2_+.$
        Set $X^\phi := \phi(X)$ and $S^\phi(k) = \phi(S(k))$ for every $k\in\N_0$. Combining \eqref{eq:1_and_2-moments} with the linearity of $\phi$, we conclude that
        \begin{align*}
               \E[X^\phi_1] = \E[X^\phi_2] = 0.
        \end{align*}
        Moreover, \eqref{eq:no-overshoot} implies that the walk $S^\phi$ satisfies
        \begin{align}\label{eq:no-overshoot-phi}
        \P(x+S^\phi(\tau_x^\phi)\in\partial \R_+^2)=1\quad\text{for all }x\in\Z_+^2,
        \end{align}
        where 
        $$
        \tau^\phi_x:=\inf\{n\ge 1:x+S^\phi(n)\notin\R_+^2\}.
        $$

        Assume first that there exist $a, b \ge 0$ such that $a + b > 0$ and 
        \begin{align*}
                \P\bigl(X^\phi = (a, b)\bigr) > 0.
        \end{align*}
        Then there exists $n_0$ such that
        \begin{align*}
                \P\bigl(\max\{x_1 + S_1(n_0), x_2 + S_2(n_0)\} > M, \tau_x > n_0\bigr) > 0,\quad x\in K_\alpha\cap R;
        \end{align*}
        it suffices to have a large number of steps $\phi^{-1}((a,b))$ at the beginning.

        Thus, it remains to consider the case when $\P\bigl(X^\phi = (a, b)\bigr)=0$ for all $a,b\ge0$ with $a+b>0$. In other words, all non-zero jumps of $X^\phi$ are of the form  $(a,-1)$ and $(-1,b)$ with $a,b\ge-1$. (Notice that smaller values of $a,b$ are impossible because of \eqref{eq:no-overshoot-phi}.) We next notice that at least one of the values of $a,b$ should be strictly bigger than one. Indeed, if $a,b\le 1$ then all non-zero jumps are in the half-plane
        $\{(x_1,x_2):\ x_1+x_2\le0\}$. But this contradicts the assumption \eqref{eq:1_and_2-moments}.
        
        Assume, without loss of generality, that $\P(X^\phi=(a,-1))>0$ for some $a>1$.
        Moreover, $\E X_1^\phi=\E X_2^\phi=0$  implies the existence of $b\ge1$ such that $\P(X^\phi=(-1,b))>0$. If the starting point $x=(x_1,x_2)\in\Z_+^2$ is such $x_2>1$ then we can alternate jumps $\phi^{-1}((a,-1))$ and $\phi^{-1}((-1,b))$ to conclude that $\P(x_1+S_1(n_0)>M,\tau_x>n_0)$ is positive for every sufficiently large $n_0$. If $x=(x_1,1)$ with some $x_1>1$ then we make first the step $\phi^{-1}((-1,b))$ reducing the problem to a starting point with $x_2>1$. Thus, it remains to consider the starting point $x=(1,1)$. But it is clear that $\P((1,1)+X^\phi\in\partial R_+^2|X\neq0)=1$. This completes the proof of the lemma. 
\end{proof}
\begin{remark}
\label{rem:V-pos}
In the proof of the previous lemma we have shown that if \eqref{eq:no-overshoot} holds
then there exists $n_0$ such that 
$$
\P\left(\max\{x_1+S_1(n_0),x_2+S_2(n_0)\}>M,\tau_x>n_0\right)>0
$$
for all $x\neq \phi^{-1}(\widehat{x})$, where $\widehat{x}=\phi^{-1}((1,1))$. Furthermore, the same property holds also for $\widehat{x}$ in the case when $X^\phi$ has a non-zero jump in $[0,\infty)^2$. Combining this with Lemma~4 in \cite{EW25}, we obtain for the non-negative harmonic function $V$ the following lower bound
\begin{align*}
V(x)&=\E[V(x+S(n_0));\tau_x>n_0]\\
&\ge \E[V(x+S(n_0));\max\{x_1+S_1(n_0),x_2+S_2(n_0)\}>M\tau_x>n_0]\\
&\le \delta \P\left(\max\{x_1+S_1(n_0),x_2+S_2(n_0)\}>M,\tau_x>n_0\right)>0.
\end{align*}
Thus, the function $V$ is strictly positive on
$(K_{\pi/m}\cap R)\setminus\{\widehat{x}\}$. Furthermore, $V(\widehat{x})>0$ if $X^\phi$ has a non-zero jump in $[0,\infty)^2$.

Finally, if all non-zero jumps of $X^\phi$ are of the form $(a,-1)$ and $(-1,b)$ then 
$$
\P(\tau_{\widehat{x}}>n)=(\P(X=0))^n,\quad n\ge1.
$$
The exponential decay of this probability implies that $V(\widehat{x})=0$.
\hfill$\diamond$
\end{remark}
In the next lemma we describe the asymptotic behaviour of the polynomial $h$ constructed in Theorem~\ref{thm:harmonic}(a).
\begin{lemma}\label{h_assympt}
Let $h$ be the polynomial constructed in Theorem~\ref{thm:harmonic}. Then the following three statements hold true.
        \begin{itemize}
                \item[(a)] There exists a constant $C$ such that 
                        \begin{align*}
                                \left|h(x)\right| < C \delta(x)|x|^{m - 1},\quad x\in K_{\pi/m}.
                        \end{align*}
                \item[(b)] There exists $M>0$ such that   
                \begin{align*}
                                h(x) >\frac{1}{2} \delta(x)|x|^{m - 1}
                                \quad\text{for all } x\in K_{\pi/m}
                                \text{ with }|x|\ge M.
                        \end{align*}
                \item[(c)]One has
                        \begin{align*}
                                \lim\limits_{\delta(x)\to\infty} \frac{h(x) - u_{\pi/m}(x)}{|x|^{m - 1}\delta(x)}
                                =0 \quad\text{and}\quad\lim\limits_{\delta(x)\to\infty}\frac{h(x)}{u_{\pi/m}(x)} 
                                = 1.
                        \end{align*}
        \end{itemize}
\end{lemma}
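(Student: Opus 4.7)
The plan is to exploit the decomposition $h(x) = u_{\pi/m}(x) + r_m(x)$ supplied by Theorem~\ref{thm:harmonic}(a), where $r_m$ is a polynomial of degree at most $m-1$, combining it with the two-sided estimate on $u_{\pi/m}$ from Lemma~\ref{lem:BM1}. The crucial observation is that both $h$ and $u_{\pi/m}$ vanish on the two boundary half-lines of $K_{\pi/m}$ and both are polynomials, so $r_m$ vanishes identically on the two full lines $\{x_2 = 0\}$ and $\{x_1\sin(\pi/m) - x_2\cos(\pi/m) = 0\}$. Since $\R[x_1,x_2]$ is a unique factorization domain and these linear forms are coprime, one obtains
$$r_m(x) = x_2\bigl(x_1\sin(\pi/m) - x_2\cos(\pi/m)\bigr)P(x),$$
for some polynomial $P$ of degree at most $m-3$ (so $P\equiv 0$ when $m=2$).

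Passing to polar coordinates $(r,\beta)$ with $0 < \beta < \pi/m$, the product of the two linear factors equals $r^{2}\sin\beta\sin(\pi/m - \beta)$, which is dominated by $r\delta(x) = r^{2}\min\bigl(\sin\beta,\sin(\pi/m-\beta)\bigr)$ since $\max\bigl(\sin\beta,\sin(\pi/m-\beta)\bigr)\le 1$. The standard polynomial bound $|P(x)|\le C(1+|x|)^{m-3}$ then yields
$$|r_m(x)| \le C'\delta(x)\,|x|^{m-2}, \qquad |x| \ge 1.$$
Together with $u_{\pi/m}(x) \le m\,\delta(x)|x|^{m-1}$ from Lemma~\ref{lem:BM1}, this delivers (a) on $\{|x|\ge 1\}$; the contribution from a bounded neighbourhood of the origin is absorbed into the constant by noting that $|h(x)|/\delta(x)$ is uniformly bounded on any compact subset of $\overline{K_{\pi/m}}$ (because $h$ vanishes linearly on the two boundary lines).

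Part (b) follows immediately: Lemma~\ref{lem:BM1} gives $u_{\pi/m}(x)\ge\delta(x)|x|^{m-1}$, while the bound above yields $h(x) \ge \delta(x)|x|^{m-2}\bigl(|x| - C'\bigr)$, which exceeds $\tfrac{1}{2}\delta(x)|x|^{m-1}$ once $|x|\ge M := 2C'$. For (c), since $r_m$ has degree at most $m-1$, we have $|r_m(x)|\le C(1+|x|^{m-1})$, so
$$\frac{|h(x) - u_{\pi/m}(x)|}{|x|^{m-1}\delta(x)} = \frac{|r_m(x)|}{|x|^{m-1}\delta(x)} \le \frac{2C}{\delta(x)} \longrightarrow 0 \quad \text{as } \delta(x)\to\infty,$$
proving the first limit. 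The second then follows from $h/u_{\pi/m} = 1 + r_m/u_{\pi/m}$ combined with $|r_m/u_{\pi/m}|\le C/\delta(x)\to 0$, again via the lower bound on $u_{\pi/m}$ from Lemma~\ref{lem:BM1}.

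The main obstacle is really just the factorization of $r_m$: one has to argue carefully that a polynomial in $\R[x_1,x_2]$ vanishing on two distinct lines through the origin is divisible by the product of their defining linear forms. Once this is in place, the remaining work consists of routine trigonometric manipulations in polar coordinates together with direct applications of Lemma~\ref{lem:BM1}.
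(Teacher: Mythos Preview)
Your proof is correct and follows essentially the same strategy as the paper's: factor out the boundary-vanishing linear forms from $h$ (and $u_{\pi/m}$), then combine the resulting degree drop with the two-sided estimate on $u_{\pi/m}$ from Lemma~\ref{lem:BM1}. The only cosmetic difference is that you factor $r_m$ by \emph{both} linear forms $x_2$ and $x_1\sin(\pi/m)-x_2\cos(\pi/m)$ directly, whereas the paper first invokes the symmetry of the cone to reduce to the half-sector $\{\arg(x_1+ix_2)\le\pi/(2m)\}$, where $\delta(x)=x_2$, and then factors out only $x_2$.
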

\begin{proof}
        By the symmetry, it suffices to prove all three statements for $x=(x_1,x_2)\in K_{\pi/m}$ with ${\rm arg}(x_1+ix_2)\le \frac{\pi}{2m}$. For such points $x$ one has $\delta(x)=x_2$.

        According to Theorem~\ref{thm:harmonic}(a), $h(x)=u_{\pi/m}(x)+r_m(x)$ and the degree of $r_m$ does not exceed $m-1$. Boundary conditions $h(x_1,0)=u_{\pi/m}(x_1,0)=0$ imply that the polynomials $u_{\pi/m}$ and $r_m$ can be divided by $x_2$. Therefore, $h(x)=x_2(\widetilde{u}_m(x)+\widetilde{r}_m(x))$, where 
        $\widetilde{u}_{\pi/m}(x)=\frac{u_{\pi/m}(x)}{x_2}$ is homogeneous of degree $m-1$ and $\widetilde{r}_m(x)$ is polynomial of degree $m-2$. 

        The upper bound for $|h|$ is now quite obvious. Indeed, by the triangle inequality,
        $$
        \left|h(x)\right|\le x_2\left(\left|\widetilde{u}_{\pi/m}(x)\right|+|\widetilde{u}_m(x)|\right)\le Cx_2|x|^{m-1}.
        $$

        To obtain the lower bound in (b) we have to estimate from below the difference
        $\widetilde{u}_{\pi/m}(x)-\widetilde{r}_{m}(x)$. Since the degree of $\widetilde{r}_{m}(x)$ is $m-2$, we have $|\widetilde{r}_m(x)|\le C_1|x|^{m-2}$
        for some constant $C_1$.
        Furthermore, using the polar coordinates, we obtain 
        $$
        \widetilde{u}_{\pi/m}(x)=\frac{|x|^{m}\sin(m\beta)}{|x|\sin\beta},
        $$
        where $\beta={\rm arg}(x_1+ix_2)$.
        We have shown in the proof of Lemma~\ref{lem:BM1} that
        $\frac{\sin(m\beta)}{\sin\beta}\ge 1$ for all $\beta\in[0,\pi/(2m)]$. This implies that 
        $$
        \widetilde{u}_{\pi/m}(x)\ge |x|^{m-1}\quad \text{for all}\quad 
        x\in K_{\pi/m}\quad \text{with}\quad  
        {\rm arg}(x_1+ix_2)\le \frac{\pi}{2m}.
        $$
        Consequently,
        $$
        \widetilde{u}_{\pi/m}(x)-\widetilde{r}_{m}(x)\ge |x|^{m-1}-C_1|x|^{m-2}
        $$
        for all $x\in K_{\pi/m}$ with ${\rm arg}(x_1+ix_2)\le \frac{\pi}{2m}$.
        Thus, the bound in (b) holds if we take $M\ge 2C_1$.

        We notice next that 
        $$
        \frac{h(x)-u_{\pi/m}(x)}{x_2|x|^{m-1}}=\frac{\widetilde{r}_m(x)}{|x|^{m-1}}.
        $$
        Since the degree of $\widetilde{r}_m$ is $m-2$, the right hand side in the previous inequality goes to zero as $\delta(x)\to\infty$. This gives the first relation in (c). To prove the second limit in (c) we write
        $$
        \frac{h(x)}{u_{\pi/m}(x)}=1+\frac{h(x)-u_{\pi/m}(x)}{u_{\pi/m}(x)}
        =1+\frac{h(x)-u_{\pi/m}(x)}{\delta(x)|x|^{m-1}}
        \frac{\delta(x)|x|^{m-1}}{u_{\pi/m}(x)}.
        $$
        Applying now the lower bound in \eqref{eq:BM2}, we infer that 
        $$
        \left|\frac{h(x)}{u_{\pi/m}(x)}-1\right|\le \frac{h(x)-u_{\pi/m}(x)}{\delta(x)|x|^{m-1}}.
        $$
        We have already proved that the ratio on the right hand side converges to zero.
        Thus, the second equality is proved as well.
\end{proof}

Now we are in position to prove Theorem~\ref{thm:uniqueness}.
\begin{proof}[Proof of Theorem~\ref{thm:uniqueness}]
Assume first that $x\in K_{\pi/m}\cap R$ is such that $V(x)>0$.

Let $\widehat{\P}_x$ denote the Doob $h$-transform of the distribution of the walk $S(n)$ killed at leaving $K_{\pi/m}$. Under this new measure, $S(n)$ is a Markov chain with the transition kernel
$$
\widehat{P}(x,y)=\frac{V(y)}{V(x)}\P(x+S(1)=y,\tau_x>1),
\quad x,y\in K_{\pi/m}\cap R.
$$
Consider the sequence of random variables
$$
W_x(n):=\frac{h(x+S(n))}{V(x+S(n))},\quad n\ge0.
$$
Combining Lemma~\ref{h_assympt} and Lemma~4 in \cite{EW25}, we infer that these random variables are bounded by some constant $C$. Furthermore, by the definition of $\widehat{\P}_x$,
\begin{align*}
\widehat{\E}_x\left[W_x(n+1)|x+S(n)=y\right]
&=\sum_{z\in K_{\pi/m}\cap R}\frac{h(z)}{V(z)}\widehat{\P}(y,z)\\
&=\sum_{z\in K_{\pi/m}\cap R}\frac{h(z)}{V(z)}\frac{V(z)}{v(y)}\P(y+X=z)\\
&=\frac{1}{V(y)}\E[h(y+X);y+X\in K_{\pi/m}].
\end{align*}
Recalling that $h$ is harmonic, we conclude that 
$$
\widehat{\E}_x\left[W_x(n+1)|x+S(n)=y\right]=\frac{h(y)}{V(y)}.
$$
Therefore, the sequence $W_x(n)$ is a martingale under $\widehat{\P}_x$ and, in particular,
$$
\widehat{\E}_x[W_x(n)]=W_x(0)=\frac{h(x)}{V(x)},\quad n\ge0.
$$
We next compute the limit of the sequence $\widehat{\E}[W_x(n)]$. We start by splitting this expectation into two parts:
$$
\widehat{\E}_x[W_x(n)]=
\widehat{\E}_x[W_x(n);\delta(x+S(n))\le \log^2 n]
+\widehat{\E}_x[W_x(n);\delta(x+S(n))> \log^2 n].
$$
Recalling that $W_x(n)$ is bounded, we get 
$$
\widehat{\E}_x[W_x(n);\delta(x+S(n))\le \log^2 n]
\le C\widehat{\P}(\delta(x+S(n))\le \log^2 n).
$$
Combining this with Proposition~3 in \cite{EW25}, we infer that 
\begin{align}\label{eq:small_distance}
\widehat{\E}_x[W_x(n);\delta(x+S(n))\le \log^2 n]\to0
\quad \text{as }n\to\infty.
\end{align}
According to Theorem~2 in \cite{DW24} and to our Lemma~13,
\begin{equation}
\label{eq:V-asymp}
\lim_{\delta(y)\to\infty}\frac{V(y)}{u_{\pi/m}(y)}=1
\quad\text{and}\quad 
\lim_{\delta(y)\to\infty}\frac{h(y)}{u_{\pi/m}(y)}=1.
\end{equation}
Thus, there exists $\varepsilon_n\to0$ such that $|W_x(n)-1|\le\varepsilon_n$ on the event $\{\delta(x+S(n))>\log^2n\}$. Consequently,
$$
\widehat{\E}_x[W_x(n);\delta(x+S(n))> \log^2 n]\to1
\quad \text{as }n\to\infty.
$$
Combining this with \eqref{eq:small_distance}, we conclude that
$\lim\limits_{n\to\infty}\widehat{\E}_x[W_x(n)]=1$. But we know that $\widehat{\E}_x[W_x(n)]=W_x(0)$ for every $n$. This implies that $h(x)=V(x)$ for all $x$ such that $V(x)>0$. By Remark~\ref{rem:V-pos}, it remains to consider the point $\widehat{x}$ under the assumption that all non-zero jumps of $X^\phi$ are of the form $(a,-1)$
and $(-1,b)$. Under this assumption we have 
$$
h(\widehat{x})=\E[h(\widehat{x}+X);\widehat{x}+X\in K_{\pi/m}]
=h(\widehat{x})\P(X=0).
$$
Since $X$ is non-degenerate, this equation has only one solution: $h(\widehat{x})=0$.
Thus, the proof is complete.
\end{proof}
\section{Moments of $\tau_x$: proof of Theorem~\ref{thm:moments}}
We first derive upper bounds for the moments $\E[\tau_x^k]$. 
\begin{lemma}\label{E_tau_assympt}
        For every integer $k < \frac{\pi}{2\alpha}$ there exists a constant 
        $C$ such that 
        \begin{align*}
                \E\bigl[\tau_x^k\bigr]\le C\delta(x)|x|^{2k(p_\alpha-1)/p_\alpha},
                \quad x\in K_\alpha\cap R,
        \end{align*}
        where $p_\alpha=\pi/\alpha$.
\end{lemma}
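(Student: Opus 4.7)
The plan is to derive the claim from a tail estimate for $\tau_x$ combined with the pointwise bound on $u_\alpha$ from Lemma~\ref{lem:BM1}. Under the assumptions \eqref{eq:1_and_2-moments} and \eqref{eq:no-overshoot}, a bound of the form
\[
\P(\tau_x>n)\le C\,u_\alpha(x)\,n^{-p_\alpha/2},\quad n\ge 1,
\]
is available: either directly from the Denisov--Wachtel tail asymptotics (\cite{DW15,DW24}), or after combining the companion estimate $\P(\tau_x>n)\le CV(x)n^{-p_\alpha/2}$ from \cite{EW25} with the asymptotic equivalence $V(x)\sim u_\alpha(x)$. The exponent $p_\alpha/2$ matches the homogeneity of $u_\alpha$ under the Brownian scaling.

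Given this input, I would use the layer-cake identity $\E[\tau_x^k]=k\int_0^\infty n^{k-1}\P(\tau_x>n)\,dn$, apply the trivial bound $\P(\tau_x>n)\le 1$ on $[0,n_0]$ and the tail estimate on $[n_0,\infty)$, and choose the natural scale $n_0=u_\alpha(x)^{2/p_\alpha}$. The first piece contributes $n_0^k=u_\alpha(x)^{2k/p_\alpha}$, while the second produces a constant multiple of the same quantity; the hypothesis $k<\pi/(2\alpha)=p_\alpha/2$ is exactly what guarantees the tail integral converges and generates a factor $(p_\alpha/2-k)^{-1}$. This yields
\[
\E[\tau_x^k]\le C\,u_\alpha(x)^{2k/p_\alpha}.
\]
Inserting the upper bound $u_\alpha(x)\le(\pi/\alpha)\delta(x)|x|^{p_\alpha-1}$ from \eqref{eq:BM2} gives
$\E[\tau_x^k]\le C\,\delta(x)^{2k/p_\alpha}|x|^{2k(p_\alpha-1)/p_\alpha}$.

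It remains to upgrade the exponent $2k/p_\alpha$ on $\delta(x)$ to $1$. Here I would invoke the argument from the beginning of the proof of Lemma~\ref{boundary_harmonic}: assumption \eqref{eq:no-overshoot} allows one to choose lattice generators $v_1,v_2\in\partial K_\alpha$, and the associated linear isomorphism identifies $R\cap K_\alpha$ with $\Z_+^2$. In particular, there is $\delta_{\min}>0$ with $\delta(x)\ge\delta_{\min}$ for every $x\in R\cap K_\alpha$. Since $2k/p_\alpha<1$, one writes $\delta(x)^{2k/p_\alpha}=\delta(x)\cdot\delta(x)^{2k/p_\alpha-1}\le\delta_{\min}^{2k/p_\alpha-1}\delta(x)$, and absorbing the constant into $C$ finishes the argument.

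The main obstacle is ensuring the tail estimate in the precise form above under our minimal moment assumption \eqref{eq:1_and_2-moments}. If the only bound in hand is $\P(\tau_x>n)\le C V(x)n^{-p_\alpha/2}$, I would substitute it at the start of the layer-cake step and reach $\E[\tau_x^k]\le CV(x)^{2k/p_\alpha}$; since $V\le C u_\alpha$ globally (an asymptotic consequence of $V\sim u_\alpha$ together with a compactness argument on $\{\,|x|\le M\,\}$, where both functions are positive and comparable to $\delta(x)$), the remainder of the proof proceeds unchanged.
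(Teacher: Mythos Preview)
Your argument is correct and follows essentially the same route as the paper: both combine the Denisov--Wachtel tail bound $\P(\tau_x>n)\le C\,u_\alpha(x+Mx_0)\,n^{-p_\alpha/2}$ with a layer-cake representation of $\E[\tau_x^k]$, split at the natural scale, and invoke the lattice fact $\delta(x)\ge\delta_0>0$ on $K_\alpha\cap R$ to fix the $\delta$-exponent. The only cosmetic difference is the order of operations---the paper applies \eqref{eq:BM2} to the tail bound first (getting $\P(\tau_x>n)\le C|x|^{p_\alpha-1}\delta(x)n^{-p_\alpha/2}$ and then splitting at $n_x\approx|x|^{2(p_\alpha-1)/p_\alpha}$), while you split at $n_0=u_\alpha(x)^{2/p_\alpha}$ and apply \eqref{eq:BM2} afterward.
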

\begin{proof}
        According to Theorem 3 in \cite{DW24}, for every fixed $x_0\in K_\alpha$ there exist $M>0$ and a constant $C$ such that 
        \begin{align*}
                \P(\tau_x > n) \le \frac{C u_\alpha(x+Mx_0)}{n^{p_\alpha/2}}.
        \end{align*}
        Combining this with the upper bound in \eqref{eq:BM2} and noting that there exists 
        $\delta_0>0$ such that $|x|\ge\delta(x)\ge\delta_0$ for all $x\in K_\alpha\cap R$, we conclude that 
        \begin{align}\label{eq:tau-ub}
                \P(\tau_x > n) \le C\frac{|x|^{p_\alpha - 1}\delta(x)}{n^{p_\alpha/2}},\quad x\in K_\alpha\cap R
        \end{align}
        with some $C > 0.$ 
        
        Set $p_k(z) := (z + 1)^k - z^k.$ Then we have
        \begin{align*}
                \sum\limits_{z = 0}^{n - 1}p_k(z) = n^k
        \end{align*}
        and 
        \begin{align*}
                \sum\limits_{n = 0}^\infty p_k(n)\P(\tau_x > n) &= \sum\limits_{n = 0}^\infty p_k(n)
                \Bigl[\sum\limits_{j = n + 1}^\infty\P(\tau_x = j)\Bigr]\\
                &= \sum\limits_{j = 1}^\infty \P(\tau_x = j)\sum\limits_{i = 0}^{j - 1}p_k(i) 
                = \sum\limits_{j = 1}^\infty j^k \P(\tau_x = j) \\
                &= \E[\tau_x^k].
        \end{align*}
         These equalities imply that for every $n_x\ge1$,
         $$
         \E[\tau_x^k]\le n_x^k+\sum_{n=n_x}^\infty p_k(n)\P(\tau_x>n).
         $$
         Noting that $p_k(n)\le k2^{k-1}n^{k-1}$ for all $n\ge1$ and applying \eqref{eq:tau-ub}, we obtain 
         \begin{align*}
         \sum_{n=n_x}^\infty p_k(n)\P(\tau_x>n)
         &\le C_1|x|^{p_\alpha-1}\delta(x)\sum_{n=n_x}^\infty n^{k-1-p_\alpha/2}\\
         &\le C_2 |x|^{p_\alpha-1}\delta(x)n_x^{k-p_\alpha/2}
         \end{align*}
         for every $k<\frac{p_\alpha}{2}=\frac{\pi}{2\alpha}$.
         Consequently,
         $$
         \E[\tau_x^k]\le n_x^k+C|x|^{p_\alpha-1}\delta(x)n_x^{k-p_\alpha/2}.
         $$
         Choosing now $n_x=1+\lfloor|x|^{2(p_\alpha-1)/p_\alpha}\rfloor$
         and using once again the bound $|x|\ge\delta(x)\ge\delta_0>0$, we obtain the desired inequality.
\end{proof}
Next we formulate a result which generalizes our construction of harmonic polynomials
to the case of polynomial right hand sides in the equation $L_X G=g$.
\begin{proposition}
\label{prop:moments}
Assume that all conditions of Theorem~\ref{thm:moments} hold. For every polynomial $f(x)$ of degree $n-2$ with $n<\frac{\pi}{\alpha}$ there exists a unique polynomial $F(x)$ of degree $n$
such that 
$$
L_XF(x)=f(x),\quad x\in K_\alpha\cap R
$$
and 
$$
F(x)=0,\quad x\in \partial K_\alpha\cap R.
$$
\end{proposition}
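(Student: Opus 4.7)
The plan is to mirror the inverse induction used in the proof of Theorem~\ref{thm:harmonic}(a), building $F$ one homogeneous component at a time in decreasing order of degree. The structural input is Lemma~\ref{taylor_exp}: for any polynomial $\phi$ of degree $k$, $L_X\phi = \frac{1}{2}\Delta\phi + g_\phi$ with $\deg g_\phi \le k-3$. The arithmetic input is the hypothesis $n<\pi/\alpha$, i.e.\ $\alpha<\pi/n$, which guarantees $b:=\tan\alpha\ne\tan(q\pi/l)$ for every $q\in\N$ and every integer $l\in\{3,\ldots,n\}$; consequently, by Proposition~\ref{mx_uq}, all matrices $M_{l,b}$ that will appear are invertible.

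For existence, decompose $f=\sum_{i=0}^{n-2}f_i$ and seek $F=\sum_{j=0}^{n}F_j$ into homogeneous components. At the top level, the equation $\frac{1}{2}\Delta F_n=f_{n-2}$ together with the boundary relations $F_n(t,0)=F_n(t\cos\alpha,t\sin\alpha)=0$ is exactly the linear system~\eqref{main_matrix_eq} with matrix $M_{n,b}$, and Proposition~\ref{mx_uq} produces a unique $F_n$. Inductively, having constructed $F_n,\ldots,F_{l+1}$ so that the polynomial $f-L_X\bigl(\sum_{k=l+1}^{n}F_k\bigr)$ has degree at most $l-2$, denote its homogeneous component of degree $l-2$ by $\tilde f_{l-2}$ and define $F_l$ as the unique solution of $\frac{1}{2}\Delta F_l=\tilde f_{l-2}$ under the same boundary conditions, again via Proposition~\ref{mx_uq}. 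Iterating down to $l=2$ leaves $L_XF-f$ of degree $\le -1$, i.e.\ identically zero; the components $F_1$ and $F_0$ are forced to vanish by the boundary conditions alone (direct calculation) and contribute nothing under $L_X$, so the construction closes cleanly.

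For uniqueness, suppose two such polynomials $F$ and $F'$ exist and set $G:=F-F'$. Then $L_XG=0$ on $R\cap K_\alpha$, $G\vert_{R\cap\partial K_\alpha}=0$, and $\deg G\le n$. Lemma~\ref{lem:f=zero} upgrades $L_XG\equiv0$ from $R\cap K_\alpha$ to all of $\R^2$, and the boundary-vanishing transfers to each homogeneous component of $G$. Corollary~\ref{nabla_n} then yields $\Delta G_r=0$ for $r:=\deg G$, and since $r\le n<\pi/\alpha$ Proposition~\ref{mx_uq} forces $G_r\equiv0$; one iterates on the polynomial $G-G_r$ of strictly smaller degree, exactly as in the proof of Corollary~\ref{low_deg_harmonic}, to conclude $G\equiv0$. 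The only real obstacle is the numerical check that $\tan\alpha$ avoids the critical set $\{\tan(q\pi/l):q\in\N\}$ for every $l\le n$, but this is automatic from $\alpha<\pi/n\le\pi/l$ once one accounts for the $\pi$-periodicity of $\tan$, so the full argument reduces to the linear-algebraic machinery already developed.
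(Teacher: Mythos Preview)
Your proposal is correct and follows essentially the same inverse-induction approach as the paper: determine the homogeneous components $F_n,F_{n-1},\ldots$ by successively solving $\tfrac12\Delta F_l=(\text{known degree-}(l{-}2)\text{ data})$ subject to the boundary conditions, invoking Proposition~\ref{mx_uq} for the invertibility of $M_{l,b}$ (the hypothesis $\alpha<\pi/n\le\pi/l$ ruling out $b=\tan(q\pi/l)$). Your separate uniqueness argument via $G=F-F'$ and Corollary~\ref{nabla_n} is slightly more explicit than the paper's, which simply records that each linear system has a unique solution, but the substance is identical.
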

\begin{proof}
As usual, we decompose all the polynomials into homogeneous components:
$$
F(x)=\sum_{k=0}^n F_k(x)
\quad\text{and}\quad
f(x)=\sum_{k=0}^{n-2}f_k(x).
$$
Similar to the proof of Proposition~\ref{mx_uq}, we determine recursively the components $F_k$ starting with the leading part $F_n$. According to Lemma~\ref{taylor_exp},
$$
L_XF(x)=\frac{1}{2}\Delta F_n(x)+\sum_{k=0}^{n-1}\frac{1}{2}\Delta F_k(x)+g_F(x),
$$
where $\widehat{g}_F$ is a polynomial of degree $n-3$. Then we can rewrite the equation
$L_XF=f$ in the following way:
$$
\frac{1}{2}\Delta F_{n}(x)=f_{n-2}(x)+\sum_{k=0}^{n-3}f_k(x)
-\sum_{k=0}^{n-1}\frac{1}{2}\Delta F_k(x)-g_F(x).
$$
The polynomials $\frac{1}{2}\Delta F_{n}(x)$ and $f_{n-2}(x)$ are of degree $n-2$, all other polynomials on the right hand side are of lower degrees.
Therefore, we have equations
$$
\frac{1}{2}\Delta F_{n}(x)=f_{n-2}(x)
\quad\text{and}\quad 
\sum_{k=0}^{n-1}\frac{1}{2}\Delta F_k(x)=\sum_{k=0}^{n-3}f_k(x)-g_F(x).
$$
We already know that the boundary condition $F(x)=0$ for all $x\in \partial K_\alpha\cap R$ implies that every component $F_k$ is zero on $K_\alpha\cap R$.
Therefore, the highest component $F_n$ should solve
$$
\frac{1}{2}\Delta F_{n}(x)=f_{n-2}(x)
\quad\text{and}\quad
F_n(x)=0,\quad x\in \partial K_\alpha\cap R.
$$
In terms of linear equations we then have
\begin{align*}
                M_{n, b}  
                        \begin{pmatrix}
                                a_{n, 0}\\
                                \vdots\\
                                a_{n, n-2}\\
                                a_{n, n -1}\\
                                a_{n, n }\\
                        \end{pmatrix}
                        =
                        \begin{pmatrix}
                                p_{n-2, 0}\\
                                \vdots\\
                                p_{n-2, n-2}\\
                                0\\
                                0\\
                        \end{pmatrix},
        \end{align*}
where $F_{n}(x)=\sum_{i=0}^n a_{n,i}v_{n,i}$, $f_{n-2}(x)=\sum_{i=0}^{n-2}p_{n-2,i}v_{n-2,i}$ and $b=\tan\alpha$. The assumption $n<\pi/\alpha$ implies that $b<\tan(\pi/n)$. Then, by Proposition~\ref{mx_uq}, the matrix $M_{n,b}$ is invertible and the system of equations possesses unique solution.

Having $F_n$ and using once again Lemma~\ref{taylor_exp}, we conclude that the equation 
$L_XF=f$ is equivalent to 
$$
L_X(F-F_n)(x)=\sum_{k=0}^{n-3}f_k(x)-g_{F_n}(x).
$$
Additionally we have the boundary condition $(F-F_n)(x)=0$ for all 
$x\in \partial K_\alpha\cap R$. The polynomial $\sum_{k=0}^{n-3}f_k(x)-g_{F_n}(x)$ is of degree $n-3$ and the polynomial $F-F_n$ is of degree $n-1$. So, repeating the same argument $n-1$ times we determine all the components $F_k$.
\end{proof}
\begin{proof}[Proof of Theorem~\ref{thm:moments}.]
We first change cosmetically the definition of exit times, we set
$$
\tau_x:=\inf\{n\ge0:x+S(n)\notin K_\alpha\}.
$$
This definition allows us to consider starting points which do not belong to $K_\alpha$.
It is clear that $\tau_x=0$ for all $x\notin K_\alpha$. 

For every $l\ge1$ we set 
$$
G_l(x):=\E[\tau_x^l],\quad x\in K_\alpha\cap R.
$$

Let us first find an exact expression for the function $G_1(x)$. 
By the Markov property,
\begin{align*}
G_1(x)&=\E[\tau_x]=\sum_{y\in R}\E[\tau_x,x+S(1)=y]\\
&=1+\sum_{y\in K_\alpha\cap R}\E[\tau_y]
=1+\E[G_1(x+X);x+X\in K_\alpha].
\end{align*}
In other words,
$$
L_XG_1(x)=-1,\ x\in K_\alpha\cap R
\quad\text{and}\quad
G_1(x)=0,\ x \in \partial K_\alpha\cap R.
$$
By Proposition~\ref{prop:moments}, there exists a quadratic polynomial $F_1$ which  solves the same boundary problem. The fact that $F$ is zero on the boundary of $K_\alpha$ implies that $F_1$ is divisible by $x_2(x_2-x_1\tan\alpha)$. Recalling that $F_1$ is quadratic we conclude that $F_1(x)=Cx_2(x_2-x_1\tan\alpha)$. It is easy to see from the equation $L_XF_1(x)=-1$ that $C=-1$.
Therefore, $F_1(x)=x_2(x_1\tan\alpha-x_2)$. 
It is now clear that 
\begin{equation}
\label{eq:F-bound}
|F_1(x)|\le C |x|\delta(x),\quad x\in K_\alpha.
\end{equation}
Since $F_1$ and $G_1$ solve the same equation, their difference is harmonic for the walk killed at leaving $K_\alpha$:
$$
L_X(F_1-G_1)(x)=0,\ x\in K_\alpha\cap R
\quad\text{and}\quad
F_1(x)-G_1(x)=0,\ x \in \partial K_\alpha\cap R.
$$
Combining Lemma~\ref{E_tau_assympt} with $k=1$, \eqref{eq:F-bound} and \eqref{eq:V-asymp}, we conclude that
\begin{equation}
\label{eq:E1-asymp}
\lim_{\delta(x)\to\infty}\frac{F_1(x)-G_1(x)}{V(x)}=0.
\end{equation}
Furthermore, Lemma~\ref{E_tau_assympt} with $k=1$, \eqref{eq:F-bound} and Lemma~4 from \cite{EW25} imply
\begin{equation}
\label{eq:E1-bound}
\sup_{x:V(X)>0}\frac{|F_1(x)-G_1(x)|}{V(x)}<\infty.
\end{equation}
To show that $G_1(x)=F_1(x)$ we consider, similar to the proof of Theorem~\ref{thm:uniqueness}, the sequence
$$
W^{(1)}_x(n):=\frac{F_1(x+S(n))-G_1(x+S(n))}{V(x+S(n))},\quad n\ge0.
$$
The harmonicity of the functions in the numerator and in the denominator implies that
the sequence $W^{(1)}_x(n)$ is a martingale under the measure $\widehat{\P}_x$ and, in particular,
$$
\widehat{\E}_x[W^{(1)}_x(n)]=W^{(1)}_x(0)=\frac{F_1(x)-G_1(x)}{V(x)},\quad n\ge0.
$$
Furthermore, \eqref{eq:E1-bound} implies that this martingale is bounded.

Repeating the arguments from the proof of Theorem~\ref{thm:uniqueness} and using 
\eqref{eq:E1-asymp} instead of \eqref{eq:V-asymp}, we conclude that 
$\widehat{\E}_x[W_x(n)]\to 0$ as $n\to\infty$. Consequently, $F_1(x)=G_1(x)$ for every $x$ such that $V(x)>0$. As we already know, the function $V$ can be zero only in the point $\phi^{-1}(1,1)$. This happens if and only if all jumps of $X^\phi$ are of the form $(-1,a)$ or $(b,-1)$.  But in this case $\tau_{\phi^{-1}(1,1)}=1$ with probability one, i.e., $G_1(\phi^{-1}(1,1))$. On the other hand, this condition on jumps implies that $L_XF_1(\phi^{-1}(1,1))=-F_1(\phi^{-1}(1,1))$. Combining this with the equation
$L_XF_1=-1$, we conclude that $F_1(\phi^{-1}(1,1))=1.$ Thus, the functions $G_1$ and $F_1$ coincide on the whole set $K_\alpha\cap R$.

To derive the claimed properties of higher moments one can use the following recursive argument. 

Assume that Theorem~\ref{thm:moments} is proved for all $l\le k-1$, that is, every $G_l$ is polynomial of degree $2l$. Applying the Markov property, we obtain the following equality for the function $G_k$:
\begin{align*}
G_k(x)&:=\E[\tau_x^k]
=\sum_{y\in R}\E[(1+\tau_y)^k]\P(x+X=y)\\
&=1+\sum_{y\in K_\alpha\cap R}\sum_{l=1}^k{k\choose l}\E[\tau_y^l]\P(x+X=y)\\
&=1+\sum_{y\in K_\alpha\cap R}\E[\tau_y^k]\P(x+X=y)
+\sum_{y\in K_\alpha\cap R}\sum_{l=1}^{k-1}{k\choose l}\E[\tau_y^l]\P(x+X=y)\\
&=1+\E[G_k(x+X);x+X\in K_\alpha]
+\sum_{l=1}^{k-1}{k\choose l}\E[G_l(x+X);x+X\in K_\alpha].
\end{align*}
Since all functions $G_l$ equal zero on the boundary of $K_\alpha$,
\eqref{eq:no-overshoot} implies that
$$
\E[G_l(x+X);x+X\in K_\alpha]=L_XG_l(x)+G_l(x).
$$
Using these equalities, we infer that the function $G_k$ solves the equation 
\begin{align*}
L_X G_k(x)=-1-\sum_{l=1}^{k-1}{k\choose l}\left(G_l(x)+L_XG_l(x)\right),
\quad x\in K_\alpha\cap R.
\end{align*}
Clearly, the function
$g_k(x):=-1-\sum\limits_{l=1}^{k-1}{k\choose l}\left(G_l(x)+L_XG_l(x)\right)$ is a polynomial of degree $2k-2$. By Proposition~\ref{prop:moments}, there exists unique polynomial $F_k$ of degree $2k$ which solves
$$
L_XF_k(x)=g_k(x)
\quad\text{and}\quad
F_k(x)=0
\quad\text{for all}\quad
x\in\partial K_\alpha\cap R.
$$
The boundary condition implies that 
$$
F_k(x)=x_2(x_1\tan\alpha-x_2)\widetilde{F}_{k}(x),
$$
where $\widetilde{F}_{k}(x)$ is a polynomial of degree $2k-2$. This representation implies that 
$$
|F_k(x)|\le C\delta(x)|x|^{2k-1}.
$$
Combining this with Lemma~\ref{E_tau_assympt}, we infer that 
\begin{equation}
\label{eq:Ek-asymp}
\lim_{\delta(x)\to\infty}\frac{F_k(x)-G_k(x)}{V(x)}=0.
\end{equation}
This allows to repeat the arguments used in the case $k=1$ and to conclude that
$G_k(x)=F_k(x)$ for all $x\in K_\alpha\cap R$. Thus, the proof is complete.
\end{proof}
\begin{corollary}

$\quad$

    \begin{itemize}
        \item[(a).] For all $x\in\K_\alpha$ with $0 < \alpha < \pi$ one has 
        \begin{align*}
            \E\bigl[x_1 + S_1(\tau_x)\bigr] = x_1,\quad
            \E\bigl[x_2 + S_2(\tau_x)\bigr] = x_2.
        \end{align*}

        \item[(b).] For all $x\in\K_\alpha$ with $0 < \alpha < \pi/2$ one has
        \begin{align*}
            &\E\bigl[(x_1 + S_1(\tau_x))^2\bigr] = x_1^2 + x_2(x_1\tan\alpha-x_2),\\ 
            &\E\bigl[(x_2 + S_2(\tau_x))^2\bigr] = x_2^2 + x_2(x_1\tan\alpha-x_2). 
        \end{align*}
    \end{itemize}
\end{corollary}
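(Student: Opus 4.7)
The plan is to apply the optional stopping theorem to two standard martingales associated with the killed walk, using Theorem~\ref{thm:moments} to supply both the integrability and the explicit right-hand side.

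For part (a) I would work with the martingale $M_i(n):=x_i+S_i(n)$, which is a martingale by \eqref{eq:1_and_2-moments}. Stopping at the bounded time $n\wedge\tau_x$ immediately gives $\E[x_i+S_i(n\wedge\tau_x)]=x_i$ for every $n$. To pass to the limit $n\to\infty$ I would use the companion identity $\E[S_i(n\wedge\tau_x)^2]=\E[n\wedge\tau_x]\le\E[\tau_x]$, obtained by optional stopping on the martingale $S_i(n)^2-n$. This yields $L^2$-boundedness of $\{S_i(n\wedge\tau_x)\}_n$, and combined with the a.s.\ convergence $S_i(n\wedge\tau_x)\to S_i(\tau_x)$ (valid because $\tau_x<\infty$ a.s.\ for $\alpha<\pi$) it gives the $L^1$-convergence required to conclude $\E[x_i+S_i(\tau_x)]=x_i$. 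In the range $\alpha<\pi/2$ the bound $\E[\tau_x]<\infty$ is delivered directly by Theorem~\ref{thm:moments}; for $\alpha\in[\pi/2,\pi)$ one has $\E[\tau_x]=\infty$, and I would instead rely on the tail estimate $\P(\tau_x>n)\le C|x|^{p_\alpha-1}\delta(x)n^{-p_\alpha/2}$ from \eqref{eq:tau-ub} together with a Doob-type maximal inequality to produce uniform integrability of $\{S_i(n\wedge\tau_x)\}_n$ in $L^1$.

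For part (b), by \eqref{eq:1_and_2-moments} the process $N_i(n):=(x_i+S_i(n))^2-n$ is a martingale, so optional stopping at $n\wedge\tau_x$ gives
\[
\E\bigl[(x_i+S_i(n\wedge\tau_x))^2\bigr]=x_i^2+\E[n\wedge\tau_x].
\]
The assumption $\alpha<\pi/2$ together with Theorem~\ref{thm:moments} ($k=1$) gives $\E[\tau_x]=x_2(x_1\tan\alpha-x_2)<\infty$, so the right-hand side converges to $x_i^2+x_2(x_1\tan\alpha-x_2)$ by monotone convergence. The same $L^2$-bound as in part (a) furnishes genuine $L^2$-convergence on the left-hand side, which identifies the limit with $\E[(x_i+S_i(\tau_x))^2]$ and yields the formula.

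The main obstacle is the uniform integrability justifying the interchange of limit and expectation. Part (b) is essentially routine once one has the closed-form expression $\E[\tau_x]=x_2(x_1\tan\alpha-x_2)$ from Theorem~\ref{thm:moments}, which is precisely why the statement restricts to $\alpha<\pi/2$ there. In part (a) the same argument covers the regime $\alpha<\pi/2$ effortlessly; extending to the full range $\alpha<\pi$ claimed in the corollary forces one to invoke the power-law tail bound for $\tau_x$ in the regime where $\E[\tau_x]=\infty$, and this is the only technically delicate step of the argument.
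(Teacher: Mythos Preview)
Your argument for part (b), and for part (a) in the regime $\alpha<\pi/2$, is correct and in fact cleaner than what the paper does. The key observation that the stopped martingale $x_i+S_i(n\wedge\tau_x)$ is $L^2$-bounded (via $\E[S_i(n\wedge\tau_x)^2]=\E[n\wedge\tau_x]\le\E[\tau_x]<\infty$) immediately gives $L^2$-convergence by the martingale convergence theorem, from which both conclusions follow. The paper instead splits off the event $\{\tau_x>n\}$, truncates $S_j(n)$ at level $n^{1/2+\varepsilon}$, and invokes two external lemmas from \cite{EW25} to show $\E[|S_j(n)|^p;\tau_x>n]\to0$ for $p=1,2$; your route avoids all of this in the range $\alpha<\pi/2$.

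The genuine gap is exactly where you flag it: part (a) for $\alpha\in[\pi/2,\pi)$. Here $\E[\tau_x]=\infty$, so the $L^2$-bound is gone, and your appeal to ``the tail estimate \eqref{eq:tau-ub} together with a Doob-type maximal inequality'' is not a working argument as stated. A naive H\"older bound $\E[|S_i(n)|;\tau_x>n]\le \|S_i(n)\|_p\,\P(\tau_x>n)^{1-1/p}$ only succeeds if one assumes enough extra moments on $X$ (the required $p$ blows up as $\alpha\to\pi$), whereas the corollary is stated under \eqref{eq:1_and_2-moments} alone. A Doob $L^p$ maximal inequality needs $L^p$-boundedness of the stopped martingale for some $p>1$, which is precisely what you do not have. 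The paper closes this gap with a specific truncation at $n^{1/2+\varepsilon}$ and the estimate from Lemma~9 of \cite{EW25}, which controls $\E[|S_j(n)|;|S_j(n)|>n^{1/2+\varepsilon},\tau_x>n]$ in terms of $\sum_{i\le n}\P(\tau_x>i)$ and a remainder; Lemma~12 of \cite{EW25} then bounds that sum. This is a real additional input, not something a generic maximal inequality supplies.
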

\begin{proof}
    The assumption $\E X=0$ implies that the sequence $\{x_j+S_j(n)\}$ is a martingale. Applying the optional stopping theorem, we get, for every $n\ge1$,  
    \begin{align}
    \label{eq:A}
    \nonumber
        x_j&=\E\bigl[x_j + S_j(\tau_x \wedge n)\bigr]\\
           &=\E\bigl[x_j + S_j(\tau_x), \tau_x\le n\bigr]
              +\E\bigl[x_j + S_j(n), \tau_x > n\bigr].
    \end{align}
    Noting that $x_j+S_j(\tau_x)$ takes only nonnegative or only nonpositive values and applying the monotone convergence theorem, we conclude that
    \begin{align}
    \label{eq:B}  
    \lim_{n\to\infty}\E\bigl[x_j + S_j(\tau_x), \tau_x\le n\bigr]
    =\E\bigl[x_j + S_j(\tau_x)\bigr].
    \end{align}
    Fix  some $\varepsilon\in(0, \frac{p_\alpha - 1}{2})$ and split the the second summand on the right hand side of \eqref{eq:A} as follows: 
    \begin{align*}
        \E\bigl[|x_j + S_j(n)|, \tau_x > n\bigr]
        &\le |x_j|\P(\tau_x>n)+\E\bigl[|S_j(n)|; |S_j(n)| \le n^{1/2+\varepsilon}, 
        \tau_x > n\bigr]\\ 
        &\hspace{3cm}+\E\bigl[|S_j(n)|, |S_j(n)| > n^{1/2+\varepsilon}, \tau_x > n\bigr]\\
        &\le (|x_j|+n^{1/2+\varepsilon})\P(\tau_x>n)\\
        &\hspace{3cm}+\E\bigl[|S_j(n)|, |S_j(n)| > n^{1/2+\varepsilon}, \tau_x > n\bigr].
    \end{align*}
    Using Lemma 9 in \cite{EW25} with $p=q=2$ and
    $r=\frac{1+4\varepsilon}{4\varepsilon}$, we get 
    \begin{align*}
        \E\Bigl[|S_j(n)|; |S_j(n)| > n^{1/2+\varepsilon}, \tau_x > n\Bigr]
        \le
        C\left(n^{-\frac{1}{2} - \varepsilon}\sum\limits_{i = 1}^n 
        \P\bigl(\tau_x > i - 1\bigr) + n^{-\varepsilon}\right).
    \end{align*}
    Furthermore, according to Lemma 12 in \cite{EW25}, there exists a constant 
    $C'$ such that 
    \begin{align}\label{eq:sum_of prob}
        \sum_{i = 0}^{n - 1}\P(\tau_x > i)
        \le C'
        \left\{
        \begin{array}{ll}
            1, &p_\alpha>2,\\
            \log n, &p_\alpha=2,\\
            n^{1-p_\alpha/2}, &p_\alpha\in(1,2),
            \end{array}
        \right.
    \end{align}
    for some $C' > 0.$ This implies that if $\varepsilon$ is sufficiently close to $\frac{p_\alpha-1}{2}$ then
    \begin{align*}
        \lim\limits_{n\to\infty}\E\Bigl[|x_j + S_j(n)|; |S_j(n)| > 
        n^{\frac{1}{2} + \varepsilon}, \tau_x > n\Bigr] = 0.
    \end{align*}
    According to Theorem 3 in \cite{DW24},
    \begin{equation}
     \label{eq:tau-tail}   
     \P(\tau_x>n)\sim\varkappa\frac{V(x)}{n^{p_\alpha/2}}.
    \end{equation}
    This implies that 
    $$
    \lim_{n\to\infty}(|x_j|+n^{1/2+\varepsilon})\P(\tau_x>n)=0.
    $$
    As a result, we have 
    $$
    \lim_{n\to\infty} \E\bigl[x_j + S_j(n), \tau_x > n\bigr]=0
    $$
    Plugging this and \eqref{eq:B} into \eqref{eq:A}, we conclude that
    $$
    x_j=\E[x_j+S_j(\tau_x)].
    $$
    This is equivalent to the desired equality.
    
     To prove the part (b) we apply the optional stopping theorem  to the martingale $S^2_j(n) - n$:  
    \begin{align*}
        \E\bigl[S^2_j(\tau_x \wedge n) - (\tau_x \wedge n)\bigr] =0.
    \end{align*}
    By the monotone convergence theorem,
    $$
    \lim_{n\to\infty} \E[\tau_x \wedge n]=\E\tau_x.
    $$
    Consequently,
    \begin{align}\label{eq:D}
    \lim_{n\to\infty}\E S^2_j(\tau_x \wedge n)
    =\E\tau_x.
    \end{align}
    Using the monotone convergence theorem once again, we have 
    \begin{align}\label{eq:E}
    \lim_{n\to\infty}\E\left[S^2_j(\tau_x);\tau_x\le n\right]
    =\E[S^2_j(\tau_x)].
    \end{align}
    Similar to the proof of the part (a), we split 
    $\E\bigl[S^2_j(n), \tau_x > n\bigr]$ into two parts:
    \begin{align*}
        &\E\bigl[S^2_j(n), \tau_x > n\bigr]\\
        &\hspace{1cm}= 
        \E\bigl[S^2_j(n), |S_j(n)| \le n^{1/2+\varepsilon}, \tau_x > n\bigr]
        + \E\bigl[S^2_j(n), |S_j(n)| > n^{1/2+\varepsilon}, \tau_x > n\bigr]\\
         &\hspace{1cm}\le n^{1+2\varepsilon}\P(\tau_x>n)
         + \E\bigl[S^2_j(n), |S_j(n)| > n^{1/2+\varepsilon}, \tau_x > n\bigr].
    \end{align*}
    In view of \eqref{eq:tau-tail},
    \begin{align}
    \label{eq:F}
    \lim_{n\to\infty} n^{1+2\varepsilon}\P(\tau_x>n)=0
    \end{align}
    for every $\varepsilon\in(0,\frac{p_\alpha-2}{4}).$
    Furthermore, applying Lemma 9 in \cite{EW25}, we get, for every $q\in(2,p_\alpha]$,
    \begin{align*}
        &\E\Bigl[S^2_j(n);|S_j(n)| > n^{1/2+\varepsilon}, \tau_x > n\Bigr]\\
        &\le C(r,q)\left( n^{(1/2+\varepsilon)(2-q)}\sum\limits_{i = 1}^n 
        \P\bigl(\tau_x > i - 1\bigr) + n^r\frac{2r}{2r - 1}
        n^{(1/2+\varepsilon)(2-2r)}\right).
    \end{align*}
    Using \eqref{eq:sum_of prob} for $p_\alpha>2$ and choosing
    $r>\frac{1+2\varepsilon}{2\varepsilon}$, we conclude that
    \begin{align*}
        \lim\limits_{n\to\infty}\E\Bigl[S^2_j(n); |S_j(n)| > 
        n^{\frac{1}{2} + \varepsilon}, \tau_x > n\Bigr] = 0.
    \end{align*}
    Combining this relation with \eqref{eq:F}, we obtain 
    $$
    \lim_{n\to\infty}[S_j^2(n);\tau_x>n]=0.
    $$
    Plugging this and \eqref{eq:E} into \eqref{eq:D}, we finally arrive at
    $$
    \E[S_j^2(\tau_x)]=\E[\tau_x].
    $$
    Using now part (a) and Theorem~\ref{thm:moments}, we have
    \begin{align*}
    \E(x_j+S_j(\tau_x))^2
    &=x_j^2+2x_j\E S_j(\tau_x)+\E S_j^2(\tau_x)  \\
    &=x_j^2+x_2(x_1\tan\alpha-x_2). 
    \end{align*}
    Thus, the proof of the corollary is finished.
\end{proof}

\section{An alternative approach to the construction of harmonic polynomials.}
Behind our proof of Theorem~\ref{thm:harmonic} stays a rather clear idea. To construct a harmonic polynomial for the walk $S(n)$ killed at leaving the cone $K_{\pi/m}$ we start with the function $h_m(x)$ which is harmonic for the Brownian killed at leaving the same cone. If the function $L_Xh_m$
is zero then the function $h_m$ is harmonic also for $S(n)$. And if $L_Xh_m$ is not zero then we construct a first correction $h_{m-1}$ which  
is chosen so that $-\Delta  h_{m-1}$ equals to the homogeneous part of degree $m-3$ of $L_Xh_m$. In other words, $h_{m-1}$ eliminates a part of $L_Xh_m$ with highest degree. Then, the next correction $h_{m-2}$ eliminates all highest degrees in $L_X(h_m+h_{m-1})$, and so on. The corrections $h_n$ are constructed by solving an appropriate system of linear equations.

In this section we describe an alternative numerical approach which allows one to construct corrections $h_{n+2}$, $n=0,1,\ldots,m-3$. As we have mentioned before, we need to eliminate linear combinations of monomials $x_1^j x_2^{k}$ with $j + k = n$. Thus, it is sufficient to describe the elimination of every monomial.
We split this step into two parts.
First, we construct a polynomial $f_{j, k}$ such that 
$\Delta f_{j, k} = x_1^j x_2^{k}.$ Second, we find a polynomial 
$g_{j, k}$ such that
\begin{align*}
    \begin{cases}
        \Delta g_{j, k}=0\\
        g_{j, k}(x_{1},0)=-f_{j, k}(x_1, 0), \\
        g_{j, k}(x_{1}, x_{1}\tan({\pi/m})) = -f_{j, k}(x_{1}, x_{1}\tan({\pi/m})).
    \end{cases}
\end{align*}
Then the polynomial $F_{j, k} := f_{j, k} + g_{j, k}$ satisfies
\begin{align*}
    \begin{cases}
        \Delta F_{j, k} = x_{1}^{j}\, x_{2}^{k}, \\
        F_{j, k}(x_{1},0) = 0, \\
        F_{j, k}(x_{1}, x_{1}\tan{\pi/m}) = 0.
    \end{cases}
\end{align*}
This means that $F_{j,k}$ eliminates $x_1^jx_2^{k}$.

\vspace{6pt}

\textbf{Construction of $f_{j, k}$.}
To construct $f_{j, k}$ we use the polar coordinates:
\begin{align*}
    x_{1} = r\cos\beta, 
    \qquad
    x_{2} = r\sin\beta.    
\end{align*}
Then 
\begin{align*}
    x_{1}^{j} x_{2}^k = r^{j + k} \cos^{j}\beta\sin^k\beta.    
\end{align*}
and the Laplacian is given by 
\begin{align*}
    \Delta 
    = \frac{\partial^{2}}{\partial r^{2}}
    + \frac{1}{r} \frac{\partial}{\partial r}
    + \frac{1}{r^{2}} \frac{\partial^{2}}{\partial \beta^{2}}.    
\end{align*}

We want to find a solution of the form
\begin{align*}
    f_{j, k}(r,\beta)
    = r^{n + 2} \sum_{l = 0}^{n} 
    \bigl(\kappa_{l}\cos(l\beta) + \mu_{l}\sin(l\beta) \bigr),    
\end{align*}

where $n = j + k$. Then
\begin{align}\label{polar_Delta}
    \Delta f_{j,k}(r,\beta) = r^{n}\sum_{l = 0}^{n}\bigl(
    (n + 2)^2 - l^{2}\bigr)\bigl( 
    \kappa_{l}\cos(l\beta) + \mu_{l}\sin(l\beta) 
    \bigr).    
\end{align}



Using the equalities 
\begin{align*}
    \cos x = \frac{e^{ix}+e^{-ix}}{2}, \qquad
    \sin x = \frac{e^{ix}-e^{-ix}}{2i},
\end{align*}
we obtain
\begin{align*}
    \cos^j x \sin^k x
   = \left(\frac{e^{ix}+e^{-ix}}{2}\right)^j
     \left(\frac{e^{ix}-e^{-ix}}{2i}\right)^k
   = \frac{1}{2^{j + k}\, i^{\,k}}
     (e^{ix}+e^{-ix})^j (e^{ix}-e^{-ix})^k.
\end{align*}
By the binomial formula,
\begin{align*}
    (e^{ix}+e^{-ix})^j
        = \sum_{w = 0}^j \binom{j}{w} e^{\,i(j - 2w)x},
    \qquad
    (e^{ix}-e^{-ix})^k
        = \sum_{s = 0}^k \binom{k}{s} (-1)^s e^{\,i(k - 2s)x}.
\end{align*}

Thus
\begin{align}
\label{eq:interm.step}
    \cos^j x \sin^k x
    = \frac{1}{2^{j + k} i^{k}}\sum_{w = 0}^j \sum_{s = 0}^k
    \binom{j}{w}\binom{k}{s}(-1)^s e^{\,i \,(j + k - 2(w + s))x}.
\end{align}

Since $i^{-k}=e^{-ik\pi/2}$, we have
\begin{align*}
    i^{-k} e^{\,i(j + k - 2(w + s))x}
    = e^{\, i\big( (j + k - 2(w + s))x - k\pi/2 \big)}.
\end{align*}

Taking real parts on the right hand side of \eqref{eq:interm.step}, we obtain
\begin{align*}
    \cos^j x \sin^k x
    = \frac{1}{2^{j + k}}\sum_{w = 0}^j \sum_{s = 0}^k
    \binom{j}{w}\binom{k}{s}(-1)^s
    \cos\Bigl((j + k - 2(w + s))x - \tfrac{j\pi}{2}\Bigr).
\end{align*}

Using this equality and (\ref{polar_Delta}), we can write the equation
$\Delta f_{j,k}=x_1^jx_2^k$ in the following form:
\begin{align}
\label{eq:coeff}
\nonumber
    \sum_{l = 0}^{n}
    \bigl((n + 2)^2 - l^{2}\bigr)
    &\bigl(\kappa_{l}\cos(l\beta) + \mu_{l}\sin(l\beta) \bigr)\\
    &=
    \frac{1}{2^{n}}\sum_{w = 0}^j \sum_{s = 0}^k
    \binom{j}{w}\binom{k}{s}(-1)^s
    \cos\Bigl( (n - 2(w + s))\beta - \tfrac{k\pi}{2} \Bigr).
\end{align}
Notice that 
the term 
$\cos\Bigl((n - 2(w + s))\beta - \tfrac{k\pi}{2}\Bigr)$
is either $\cos\!\Bigl((n - 2(w + s))\beta \Bigr) $
or $\sin\Bigl((n - 2(w + s))\beta\Bigr)$. 
Since the functions $\sin(l\beta)$ and $\cos(l\beta)$ are linearly independent, \eqref{eq:coeff} uniquely determines the numbers $\kappa_l,\mu_l$. 
Furthermore, noticing that $n - 2(w + s)$ 
has the same parity as $n$, we infer that $\kappa_l,\mu_l$ might be non-zero only 
if $l$ has the same parity as $n$. This allows us to obtain the polynomial 
when we get back to Euclidian coordinates. 
Indeed, for Chebyshev polynomials $T_l$ and $U_l$ 
we further write 
\begin{align*}
    r^n\cos(l\beta) = r^{n - l} r^l T_l(\cos \beta)
    =(x_1^2+x_2^2)^\frac{n - l}{2}r^l T_l(\cos \beta). 
\end{align*}
Here 
$(x_1^2+x_2^2)^\frac{n - l}{2}$ 
is a polynomial since $n$ and $l$ have the same parity. 
Using the identity $\cos^2\beta + \sin^2\beta=1$ we can 
further rewrite $r^lT_l(\cos \beta)$ as a polynomial 
of $x_1,x_2$ of degree $l$. 
Similarly, 
\begin{align*}
    r^n\sin(l\beta) = r^{n - l} r^l\sin(\beta) U_{l - 1}(\sin \beta).
\end{align*}
Hence  we have a homogeneous polynomial $f_{j, k}$ of degree $n+2$ that 
satisfies 
\begin{align*}
    \Delta f_{j, k} = x_1^jx_2^k.
\end{align*}

\vspace{6pt}

\textbf{Construction of $g_{j, k}$.}
Since $f_{j,k}$ is homogeneous of degree $n+2$, we have
\begin{align*}
    f_{j, k}(x_1,0) = A x_1^{n + 2} 
\end{align*}
for some constant $A.$ Furthermore, on the line $x_2=(\tan \alpha)x_1$ we have 
\begin{align*}
    f_{j, k}(x_1,(\tan \alpha)x_1) = B x_2^{n + 2} 
\end{align*}
for some constant $B$.
Observe that the harmonic function
\(g_1(r,\beta)= r^{n + 2}\sin\!\bigl((n + 2)\beta\bigr)\), written in Euclidian coordinates,
vanishes on the line \( x_2 = 0 \) and satisfies
\begin{align*}
    g_1(x_1, (\tan \alpha)\, x_1) = \tilde A\, x_2^{\,n + 2}.
\end{align*}
Since \( n + 2 < m \), we conclude that \( \tilde A \neq 0 \).
 
Similarly, we can construct a harmonic function \( g_2 \) that
vanishes on the line \( x_2 = (\tan \alpha)\, x_1 \) and is non-zero
on the line \( x_2 = 0 \).
Then
\begin{align*}
    g_{j, k} = \hat A g_1 + \hat B g_2,
\end{align*}
where the constants \( \hat A \) and \( \hat B \) are chosen appropriately,
satisfies
\begin{align*}
    g_{j, k}(x_1,x_2) = -f_{j, k}(x_1,x_2)
\end{align*}
on the lines \( x_2 = 0 \) and \( x_2 = (\tan \alpha)\, x_1 \).

\clearpage
\section{Appendix}
Below we give a pseudocode of an algorithm for numerical computation of coefficients of positive discrete harmonic polynomials.
    \begin{algorithm}[H]
        \captionof{algorithm}{Computation of harmonic polynomials}\label{cn_alg}
        \begin{algorithmic}
            \State $b := \tan{\frac{\pi}{m}},\quad 
            m_{even} := \lfloor \frac{m}{2} \rfloor$
            \For {$k\gets 0, m_{even}$}
                \State $a_{m, 2k} := 0$  
            \EndFor
            \State $m_{odd} := \lceil \frac{m - 3}{2}\rceil$
            \For{$k\gets 0, m_{odd}$}
                \State $a_{m, 2k + 1} := (-1)^k\binom{m}{2k + 1}$
            \EndFor
            \State $n\gets m - 1$
            \While{$n > 1$} 
                \For {$s \gets 0, n - 2$}
                    \State $c_{n - 2, s} := 0$
                    \For {$i \gets n + 1, m$}
                        \For {$j \gets 0, i$}
                            \State $c_{n - 2, s} \gets c_{n - 2, s} - \binom{j}{s}
                            \binom{i - j}{n - 2 - s}\E\bigl[X_2^{\max\{j - s, 0\}}
                            X_1^{\max\{i - j - n + 2 + s, 0\}}\bigr]\cdot a_{i, j}$
                        \EndFor
                    \EndFor
                \EndFor
                \State $n_{even} := \lfloor\frac{n}{2}\rfloor$
                \State $a_{n, 0}\gets 0$
                \For {$k \gets 1, n_{even}$}
                    \State $a_{n, 2k} \gets \bigl(c_{n - 2, 2k - 2} - \binom{n - 2k + 2}{2}
                    \cdot a_{n, 2k - 2}\bigr)\cdot \frac{1}{\binom{2k}{2}}$
                \EndFor
                \State $n_{odd} := \lceil\frac{n - 3}{2}\rceil,
                \quad r_{n - 2} := 0$
                \For {$k \gets 0, n_{even}$}
                    \State $r_{n - 2}\gets -b^{2k}\cdot a_{n, 2k}$ 
                \EndFor
                \State $\lambda_{n, 1} := \frac{-b}{\binom{n - 1}{2}}$
                \For {$k \gets 1, n_{odd}$}
                    \State $r_{n - 2}\gets r_{n - 2} + \lambda_{n, 2k - 1}
                    \cdot c_{n - 2, 2k - 1}$
                    \If {$k + 1 \le n_{odd}$}
                        \State $\lambda_{n, k + 1} := (-1)\cdot \frac{b^{2k + 1} + 
                        \lambda_{n, k}\cdot \binom{2k + 1}{2}}{\binom{n - 2k - 1}{2}}$
                    \EndIf
                \EndFor
                \State $C(n) :=\frac{1}{{n\choose 2n_{odd}+1}},\quad
                \mu_{n, n_{odd}} = \frac{-\binom{2n_{odd} + 1}{2}}{C(n)\cdot \
                Q_n(1, b)}$
                \State $c_{n - 2, 2n_{odd} - 1} \gets c_{n - 2, 2n_{odd} - 1} + 
                r_{n - 2}\cdot\mu_{n, n_{odd}}$
                \For {$k \gets n_{odd} - 1, 1$}
                    \State $\mu_{n, k} := \frac{-\binom{2k}{2}}{\binom{n - 2k - 1}{2}}$
                    \State $c_{n - 2, 2k - 1}\gets c_{n - 2, 2k - 1} + 
                    c_{n - 2, 2k + 1}\cdot\mu_{n, k}$
                \EndFor
                \For {$k \gets 1, n_{odd} - 1$}
                    \State $a_{n, 2k + 1} := \frac{c_{n - 2, 2k + 1}}{\binom{n - 1 - 2k}{2}}$
                \EndFor
                \State $a_{n, 2n_{odd} + 1} := \frac{r_{n - 2}}{C(n)\cdot Q_n(1, b)}$
            \EndWhile
        \end{algorithmic}
    \end{algorithm}
\vspace{24pt}

\end{document}